\def\P{\mathbb{P}}
\def\R{\mathbb{R}}
\def \F {\mathcal{F}}
\def\E{\mathbb{E}}
\def\N{\mathbb{N}}
\def \1nd{\mathds{1}}
\def \I{\mathbb{I}}
\DeclareMathOperator*{\essinf}{ess\,inf}
\newtheorem{deff}{Definition}[section]
\newtheorem{thm}[deff]{Theorem}
\newtheorem{lemma}[deff]{Lemma}
\newtheorem{rem}[deff]{Remark}
\title{Predicting the Last Zero of a Spectrally Negative L\'evy Process}
\begin{document}

\author{Erik J. Baurdoux\footnote{Department of Statistics, London School of Economics and Political Science. Houghton street, {\sc London, WC2A 2AE, United Kingdom.} E-mail: e.j.baurdoux@lse.ac.uk} \quad \& \quad J. M. Pedraza  \footnote{Department of Statistics, London School of Economics and Political Science. Houghton street, {\sc London, WC2A 2AE, United Kingdom.} E-mail: j.m.pedraza-ramirez@lse.ac.uk} }
\date{\today}
\maketitle

\begin{abstract}
\noindent
Last passage times arise in a number of areas of applied probability, including risk theory and degradation models. Such times are obviously not stopping times since they depend on the whole path of the underlying process. 
We consider the problem of finding a stopping time that minimises the $L^1$-distance to the last time a spectrally negative  L\'evy process $X$ is below zero. Examples of related problems in a finite horizon setting for processes with continuous paths are \cite{du2008predicting} and \cite{glover2014optimal}, where the last zero is predicted for a Brownian motion with drift, and for a transient diffusion, respectively. 

As we consider the infinite horizon setting, the problem is interesting only when the L\'evy process drifts to $\infty$ which we will assume throughout. Existing results allow us to rewrite the problem as a classic optimal stopping problem, i.e. with an adapted payoff process. We use a direct method to show that an optimal stopping time is given by the first passage time above a level defined in terms of the median of the convolution with itself of the distribution function of $-\inf_{t\geq 0}X_t$. We also characterise when continuous and/or smooth fit holds.\end{abstract}

\noindent
{\footnotesize Keywords: L{\'{e}}vy processes, optimal prediction, optimal stopping.}

\noindent
{\footnotesize Mathematics Subject Classification (2000): 60G40, 62M20}

\section{Introduction}
In recent years last exit times have been studied in several areas of applied probability, e.g. in risk theory (see \cite{chiu2005passage}). Consider the Cram\'er--Lundberg process, which is a process consisting of a deterministic drift plus a compound Poisson process which has only negative jumps (see Figure \ref{Cramerlundberg}) which typically models the capital of an insurance company. A key quantity of interest is the time of ruin $\tau_0$, i.e. the first time the process becomes negative. Suppose the insurance company has funds to endure negative capital for some time. Then another quantity of interest is the last time $g$ that the process is below zero. In a more general setting we may consider a spectrally negative L\'evy process instead of the classical risk process. We refer to \cite{chiu2005passage} and \cite{baurdoux2009last} for the Laplace transform of the last time before an exponential time a spectrally negative L\'evy process is below some level.

\begin{figure}[H]
\begin{center} 
\setlength{\unitlength}{.25cm} 
\centering 
\begin{picture}(30,20) 
 
\put(2,0){\vector(0,1){19}} 
\put(2,6){\vector(1,0){24}}
\put(0,18){\small {$X_t$}}
\put(25,4.5){\small{$t$}}

\put(1,9){\tiny{x}}
\put(1.75,9.25){\line(1,0){.5}}

\put(2,9.25){\line(3,4){3}}
\put(5.1,13.3){\circle{.3}}	
\multiput(5.1,13.3)(0,-.5){4}{\line(0,-1){.25}}

\put(5.1,11.5){\line(3,4){2}}
\put(7.2,14.2){\circle{.3}}
\multiput(7.2,14.2)(0,-.5){13}{\line(0,-1){.25}}


\put(7.2,7.9){\line(3,4){2.5}}
\put(9.8,11.3){\circle{.3}}
\multiput(9.8,11.3)(0,-.5){17}{\line(0,-1){.25}}

\put(9.8,5){\small{$\tau_0$}}

\put(15,5){\small{$g$}}

\put(9.8,3){\line(3,4){4.5}}
\put(14.4,9.1){\circle{.3}}
\multiput(14.4,9.1)(0,-.5){8}{\line(0,-1){.25}}

\put(14.4,5.3){\line(3,4){5}}
\put(19.5,12){\circle{.3}}
\multiput(19.5,12)(0,-.5){6}{\line(0,-1){.25}}

\put(19.5,9.2){\line(3,4){3}}

\end{picture}
\end{center} 
\caption{Cram\'er--Lundberg process with $\tau_0$ the moment of ruin and $g$ the last zero.}
\label{Cramerlundberg}
\end{figure}
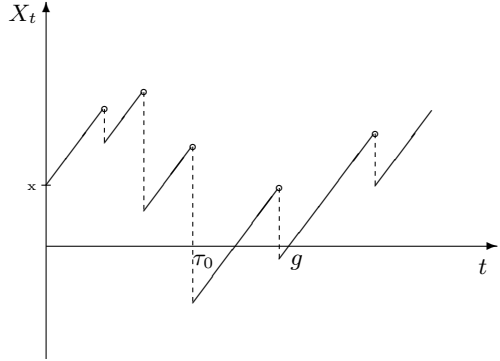

Last passage times also appear in financial modeling. In particular, \cite{madan2008black,madan2008option} showed that the price of a European put and call option for certain non-negative, continuous martingales can be expressed in terms of the probability distributions of last passage times. 

Another application is in degradation models. \cite{paroissin2013first} proposed a spectrally positive L\'evy process as a degradation model. They consider a subordinator perturbed by an independent Brownian motion. The presence of a Brownian motion can model small repairs of the component or system and the jumps represents major deterioration.  Classically, the failure time of a component or system is defined as the first hitting time of a critical level $b$ which represents a failure or a bad performance of the component or system. Another approach is to consider instead the last time that the process is under $b$. Indeed, for this process the paths are not necessarily monotone and hence when the process is above the level $b$ it can return back below it later.\\

The main aim of this paper is to predict the last time a spectrally negative L\'evy process is below zero. More specifically, we aim to find a stopping time that is closest (in $L^1$ sense) to the above random time. This is an example of an optimal prediction problem. Recently, these problems have received considerable attention, for example, \cite{bernyk2011predicting} predicted the time at which a stable spectrally negative L\'evy process attains its supremum in a finite time horizon. A few years later, the infinite time horizon version was solved in \cite{baurdoux2014predicting} for a general L\'evy process with an infinite time horizon. \cite{glover2013three} predicted the time of its ultimate minimum for a transient diffusion processes. \cite{du2008predicting} predicted the last zero of a Brownian motion with drift and \cite{glover2014optimal} predicted the last zero of a transient difussion. It turns out that the problems just mentioned are equivalent to an optimal stopping problem, in other words, optimal prediction problems and optimal stopping problems are intimately related. 

The rest of this paper is organised as follows. In Section \ref{sec_prereqs} we discuss some preliminaries and some technicalities to be used later. Section \ref{sec_main} concerns main result, Theorem \ref{thm:maintheorem}. Section \ref{sec_proof} is then dedicated to the proof of Theorem \ref{thm:maintheorem}. In the final Section we consider specific examples.

\section{Prerequisites and formulation of the problem}\label{sec_prereqs}

Formally, let $X$ be a spectrally negative L\'evy process drifting to infinity, i.e. $\lim_{t\rightarrow \infty} X_t=\infty$, starting from $0$ defined on a filtered probability space $(\Omega,\F, \mathbb{F}, \P)$ where $\mathbb{F}=\{\F_t,t\geq 0 \}$ is the filtration generated by $X$ which is naturally enlarged (see Definition 1.3.38 in \cite{bichteler2002stochastic}). Suppose that $X$ has L\'evy triple $(c,\sigma, \Pi)$ where $c\in \R$, $\sigma\geq 0$ and $\Pi$ is the so-called L\'evy measure concentrated on $(-\infty,0)$ satisfying $\int_{(-\infty,0)} (1\wedge x^2)\Pi(dx)<\infty$. Then the characteristic exponent defined by $\Psi(\theta):=-\log(\E(e^{i \theta X_1}))$ take the form 

\begin{align*}
\Psi(\theta)=ic\theta+\frac{1}{2}\sigma^2 \theta^2 +\int_{(-\infty,0)} (1-e^{i\theta x}+i\theta x \I_{\{ x>-1\}})\Pi(dx).
\end{align*}
Moreover, the L\'evy--It\^o decomposition states that $X$ can be represented as

\begin{align*}
X_t=-ct+\sigma B_t+\int_{[0,t]} \int_{\{x\leq -1 \}} xN(ds,dx)+\int_{[0,t]} \int_{\{x> -1 \}} (xN(ds,dx)-x\Pi(dx)ds),
\end{align*}
where $\{B_t,t\geq 0\}$ is an standard Brownian motion, $N$ is a Poisson random measure with intensity $ds\times \Pi(dx)$ and the process $\{\int_{[0,t]} \int_{\{x> -1 \}} (xN(ds,dx)-x\Pi(dx)ds),t\geq 0\}$ is a square-integrable martingale. Furthermore, it can be shown that all L\'evy processes satisfy the strong Markov property.\\

Let $W^{(q)}$ and $Z^{(q)}$ the scale functions corresponding to the process $X$ (see \cite{kyprianou2013fluctuations} or \cite{bertoin1998levy} for more details). That is, $W^{(q)}$ is such that $W^{(q)}(x)=0$ for $x<0$, and is characterised on $[0,\infty)$ as a strictly increasing and continuous function whose Laplace transform satisfies
\begin{align*}
\int_{0}^{\infty}e^{-\beta x}W^{(q)}(x)dx=\frac{1}{\psi(\beta)-q}\qquad \text{for } \beta>\Phi(q),
\end{align*}
and 
\begin{align*}
Z^{(q)}(x)=1+q\int_0^x W^{(q)}(y)dy
\end{align*}
where $\psi$ and $\Phi$ are, respectively, the Laplace exponent and its right inverse given by

\begin{align*}
\psi(\lambda)&=\log\E(e^{\lambda X_1})\\
\Phi(q)&=\sup\{\lambda \geq 0 : \psi(\lambda)=q \}
\end{align*}
for $\lambda,q\geq 0$.
Note that $\psi$ is zero at zero and tends to infinity at infinity. Moreover, it is infinitely differentiable and strictly convex with $\psi'(0+)=\E(X_1)>0$ (since $X$ drifts to infinity). The latter directly implies that $\Phi(0)=0$.\\

We know that the right and left derivatives of $W$ exist (see \cite{kyprianou2013fluctuations} Lemma 8.2). For ease of notation we shall assume that $\Pi$ has no atoms when $X$ is of finite variation, which guarantees that $W\in C^1(0,\infty)$. Moreover, for every $x\geq 0$ the function $q\mapsto W^{(q)}$ is an analytic function on $\mathbb{C}$.\\

If $X$ is of finite variation we may write 

\begin{align*}
\psi(\lambda)=d \lambda -\int_{(-\infty,0)}(1-e^{\lambda y})\Pi(dy),
\end{align*}
where necessarily 

\begin{align*}
d=-c-\int_{(-1,0)}x\Pi(dx)>0.
\end{align*}
With this notation, from the fact that $0\leq 1-e^{\lambda y}\leq 1$ for $y\leq 0$ and using the dominated convergence theorem we have that 

\begin{align}
\label{eq:expressionphibounded}
\psi'(0+)=d+\int_{(-\infty,0)}x\Pi(dx).
\end{align} 	
For all $q\geq 0$, the function $W^{(q)}$ may have a discontinuity at zero and this depends on the path variation of $X$: in the case that $X$ is of infinite variation we have that $W^{(q)}(0)=0$, otherwise 

\begin{align}
\label{eq:Watzero}
W^{(q)}(0)=\frac{1}{d}.
\end{align}

There are many important fluctuations identities in terms of the scale functions $W^{(q)}$ and $Z^{(q)}$ (see \cite{bertoin1998levy} Chapter VII or \cite{kyprianou2013fluctuations} Chapter 8). We mention some of them that will be useful for us later on. Denote by $\tau_0^-$ the first time the process $X$ is below the zero, i.e.

\begin{align*}
\tau_0^-=\inf \{ t>0: X_t <0\}.
\end{align*}
We then have for $x\in\mathbb{R}$
\begin{align}
\label{eq:tau0finite}
\P_x(\tau_0^-<\infty)=\left\{
\begin{array}{ll}
1-\psi'(0+)W(x) & \text{if } \psi'(0+)\geq 0,\\
1 & \text{if } \psi'(0+)<0,
\end{array}
\right.
\end{align}
where $\P_x$ denotes the law of $X$ started from $x$.

Let us define the $q$-potential measure of $X$ killed on exiting $(-\infty,a)$ for $q\geq 0$ as  follows
\begin{align*}
R^{(q)}(a,x,dy):=\int_0^{\infty} e^{-qt}\P_x(X_t\in dy,\tau_a^+>t).
\end{align*}
The potential measure $R^{(q)}$ has a density $r^{(q)}(a,x,y)$ (see \cite{kyprianou2011theory} Theorem 2.7 for details) which is given by
\begin{align}
\label{eq::qpotentialkillinglessa}
r^{(q)}(a,x,y)=e^{-\Phi(q)(a-x)}W^{(q)}(a-y)-W^{(q)}(x-y).
\end{align}
In particular, $R^{(0)}$ will be useful later. Another pair of processes that will be useful later on are the running supremum and running infimum defined by

\begin{align*}
\overline{X}_t=\sup_{0\leq s \leq t} X_s,\\
\underline{X}_t=\inf_{0\leq s\le t} X_s.
\end{align*}
The well-known duality lemma states that the pairs $(\overline{X}_t, \overline{X}_t-X_t)$ and $(X_t-\underline{X}_t,-\underline{X}_t)$ have the same distribution under the measure $\P$. Moreover,  with $e_q$ an independent exponential distributed random variable with parameter $q>0$, we deduce from the Wiener--Hopf factorisation that the random variables $\overline{X}_{e_q}$ and $\overline{X}_{e_q}-X_{e_q}$ are independent. Furthermore, in the spectrally negative case, $\overline{X}_{e_q}$ is exponentially distributed with parameter $\Phi(q)$. From the theory of scale functions we can also deduce that $-\underline{X}_{e_q}$ is a continuous random variable with
\begin{align}
\label{eq:densityofrunninginfimum}
\P(-\underline{X}_{e_q} \in dz)=\left(\frac{q}{\Phi(q)} W^{(q)}(dz)-qW^{(q)}(z)\right)dz
\end{align}
for $z\geq 0$.

Denote by $g_r$ as the last passage time below $r\geq 0$, i.e.

\begin{align}
\label{eq:lastzero}
g_r=\sup\{t\geq 0:X_t\leq r \}.
\end{align}
When $r=0$ we simply write $g_0=g$.  

\begin{rem}
\label{rem:lastzero}
Note that from the fact that $X$ drifts to infinity we have that $g_r<\infty$ $\P$-a.s. Moreover, as $X$ is a spectrally negative L\'evy process, and hence the case of a compound Poisson process is excluded, the only way of exiting the set $(-\infty,r]$ is by creeping upwards. This tells us that $X_{g_r-}=r$ and that $g_r=\sup\{ t\geq 0: X_t<r\}$ $\P$-a.s.
\end{rem}

Clearly, up to any time $t\geq 0$ the value of $g$ is unknown (unless $X$ is trivial), and it is only with the realisation of the whole process that we know that the last passage time below $0$ has occurred. However, this is often too late: typically one would like to know how close $X$ is to $g$ at any time $t\geq 0$ and then take some action based on this information. We search for a stopping time $\tau_*$ of $X$ that is as ``close'' as possible to $g$. Consider the optimal prediction problem

\begin{align}
\label{eq:optimalprediction}
V_*=\inf_{\tau \in \mathcal{T} } \E|g-\tau|,
\end{align}
where  $\mathcal{T}$ is the set of all stopping times. 

\section{Main result}\label{sec_main}

Before giving an equivalence between the optimal prediction problem \eqref{eq:optimalprediction} and an optimal stopping problem we prove that the random times $g_r$ for $r\geq 0$ have finite mean. For this purpose, let $\tau_x^+$ be the first passage time above $x$, i.e,
\begin{align*}
\tau_x^+=\inf\{t> 0:X_t>x \}.
\end{align*}
%
%
%
%
%
%

\begin{lemma}
\label{cor:moments}
Let $X$ be a spectrally negative L\'evy process drifting to infinity with L\'evy measure $\Pi$ such that 
\begin{align}
\label{eq:assumptionofPi}
\int_{(-\infty,-1)} x^2\Pi(dx)<\infty.
\end{align}
 Then 
$\E_x(g_r)<\infty$ for every $x,r \in \R$. 
\end{lemma}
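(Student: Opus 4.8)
The plan is to reduce everything to showing that the overall infimum $-\underline{X}_\infty$ has finite mean, and to control $g_r$ through the $0$-potential measure of $X$. First I would use spatial homogeneity to reduce to the case $x=0$: under $\P_x$ the process $X$ has the law of $x+X$ under $\P_0$, so $\E_x(g_r)=\E_0(g_{r-x})$, and it suffices to prove $\E_0(g_s)<\infty$ for every $s\in\R$. Writing $\E_0(g_r)=\int_0^\infty \P(g_r>t)\,dt$, I would then observe that $\{g_r>t\}=\{\inf_{s>t}X_s\le r\}$ and apply the Markov property at time $t$: since $\{X_{t+u}-X_t\}_{u\ge 0}$ is independent of $\F_t$ and distributed as $X$ under $\P_0$, one obtains
\begin{align*}
\P(g_r>t)=\P\big(X_t+\underline{X}'_\infty\le r\big),
\end{align*}
where $\underline{X}'_\infty$ is an independent copy of $\underline{X}_\infty=\inf_{s\ge 0}X_s$.

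Next, Fubini gives $\E_0(g_r)=\E_M\big[\int_0^\infty \P(X_t\le r+M)\,dt\big]$ with $M:=-\underline{X}'_\infty\ge 0$, and the inner integral is exactly $U((-\infty,r+M])$, where $U(dy)=\int_0^\infty\P(X_t\in dy)\,dt$ is the $0$-potential measure of $X$. Its density is obtained by letting $q\downarrow 0$ in the known $q$-resolvent density of a spectrally negative L\'evy process (see \cite{kyprianou2013fluctuations}), giving $u(y)=\frac{1}{\psi'(0+)}-W(-y)$; note $u\equiv \psi'(0+)^{-1}$ on $(0,\infty)$, while for $y<0$ one has $\frac{1}{\psi'(0+)}-W(-y)=\psi'(0+)^{-1}\P(-\underline{X}_\infty>-y)$, so that $U((-\infty,a])\le \psi'(0+)^{-1}(a_++\E(-\underline{X}_\infty))$. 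Taking expectations over $M$ and using $(r+M)_+\le r_++M$ yields
\begin{align*}
\E_0(g_r)\le \frac{r_++2\,\E(-\underline{X}_\infty)}{\psi'(0+)},
\end{align*}
so the whole problem is reduced to proving $\E(-\underline{X}_\infty)<\infty$.

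For this last step I would use \eqref{eq:tau0finite}, which gives $\P(-\underline{X}_\infty>y)=\P_y(\tau_0^-<\infty)=1-\psi'(0+)W(y)$ for $y\ge 0$, whence $\E(-\underline{X}_\infty)=\int_0^\infty(1-\psi'(0+)W(y))\,dy$. Using $\int_0^\infty e^{-\beta y}W(y)\,dy=1/\psi(\beta)$ one computes
\begin{align*}
\int_0^\infty e^{-\beta y}\big(1-\psi'(0+)W(y)\big)\,dy=\frac{1}{\beta}-\frac{\psi'(0+)}{\psi(\beta)}\xrightarrow[\beta\downarrow 0]{}\frac{\psi''(0+)}{2\psi'(0+)},
\end{align*}
by expanding $\psi(\beta)=\psi'(0+)\beta+\tfrac12\psi''(0+)\beta^2+o(\beta^2)$; since the integrand is nonnegative, monotone convergence identifies the full integral with this limit, so $\E(-\underline{X}_\infty)=\psi''(0+)/(2\psi'(0+))$.

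The main obstacle is therefore to see that $\psi''(0+)<\infty$, i.e.\ that the hypothesis is exactly what is needed. This holds because $\psi''(0+)=\mathrm{Var}(X_1)$, and a L\'evy process has a finite second moment if and only if $\int_{|x|>1}x^2\,\Pi(dx)<\infty$; as $\Pi$ is supported on $(-\infty,0)$ this is precisely \eqref{eq:assumptionofPi}. Beyond this, the points requiring care are the finiteness and the $q\downarrow 0$ limit of the potential density (so that $U((-\infty,a])<\infty$, which relies on $X$ drifting to infinity), together with the Fubini and monotone-convergence interchanges, all of which become routine once $\E(-\underline{X}_\infty)<\infty$ is established.
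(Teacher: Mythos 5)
Your proposal is correct, and it takes a genuinely different route from the paper. The paper's proof quotes the explicit Laplace transform $\E_x(e^{-qg})=e^{\Phi(q)x}\Phi'(q)\psi'(0+)+\psi'(0+)(W(x)-W^{(q)}(x))$ from \cite{baurdoux2009last} and \cite{chiu2005passage} and differentiates it at $q=0+$, using the analyticity of $q\mapsto W^{(q)}(x)$ together with the computations $\Phi'(0)=1/\psi'(0+)$ and $\Phi''(0)=-\psi''(0+)/\psi'(0+)^3$; finiteness then reduces to $\psi''(0+)<\infty$, which is exactly what \eqref{eq:assumptionofPi} provides. You instead avoid the Laplace transform of $g$ altogether: you write $\E_0(g_r)=\int_0^\infty\P(g_r>t)\,dt$, factor $\P(g_r>t)=\P(X_t+\underline{X}'_\infty\le r)$ via the Markov property, and bound the resulting occupation quantity by the $0$-potential measure, reducing everything to $\E(-\underline{X}_\infty)=\psi''(0+)/(2\psi'(0+))<\infty$. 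Both arguments hinge on the same moment condition in the end, but yours is more self-contained (it does not rely on the quoted identity for $\E_x(e^{-qg})$ or on term-by-term differentiation of a Laplace transform) and it produces an explicit quantitative bound $\E_0(g_r)\le(r_++2\,\E(-\underline{X}_\infty))/\psi'(0+)$, whereas the paper's route is shorter given the cited identity. The interchanges you flag (Fubini, the $q\downarrow 0$ limit of the resolvent density, monotone convergence in $\beta\downarrow 0$) are indeed all justified by nonnegativity and monotonicity, so there is no gap.
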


\begin{proof}
Note that by the spatial homogeneity of L\'evy processes we have to prove that for all $x,r \in \R$.
\begin{align*}
\E_x(g_r)=\E_{x-r}(g)<\infty.
\end{align*} 
Then it suffices to take $r=0$. From \cite{baurdoux2009last} (Theorem 1) or \cite{chiu2005passage} (Theorem 3.1) we know that for a spectrally negative L\'evy process such  that $\psi'(0+)>0$ the Laplace transform of $g$ for $q\geq 0$ and $x\in \R$ is given by

\begin{align*}
\E_x(e^{-q g})=e^{\Phi(q )x}\Phi'(q)\psi'(0+)+\psi'(0+)(W(x)-W^{(q)}(x)).
\end{align*}
Then, from the well-known result which links the moments and derivatives of the Laplace transform (see \cite{feller1971an} (section XIII.2)), the expectation of $g$ is given by 
\begin{align*}
\E_x(g)&=-\frac{\partial}{ \partial q} \E_x(e^{-q g})\bigg|_{q=0+}\\
&=\psi'(0+)\frac{\partial }{ \partial q}W^{(q)}(x)\bigg|_{q=0+}-\psi'(0+)[\Phi''(q)e^{\Phi(q)x}+x\Phi'(q)^2e^{\Phi(q)x}]\bigg|_{q=0+}\\
&=\psi'(0+)\frac{\partial }{ \partial q}W^{(q)}(x)\bigg|_{q=0+}-\psi'(0+)[\Phi''(0)+x\Phi'(0)^2]
\end{align*}

We know that for any $x\in \R$ the function $q\mapsto W^{(q)}$ is analytic, therefore the first term in the last expression is finite. Hence $g$ has finite second moment if $\Phi'(0)$ and $\Phi''(0)$ are finite. Recall that the function $\psi:[0,\infty)\mapsto \R$ is zero at zero and tends to infinity at infinity. Further, it is infinitely differentiable and strictly convex on $(0,\infty)$. Since $X$ drifts to infinity we have that $0<\psi'(0+)<\psi'(\lambda)$ for any $\lambda>0$.
We deduce that $\psi$ is strictly increasing in $[0,\infty)$ and the right inverse $\Phi(q)$ is the usual inverse for $\psi$. From the fact that $\psi$ is strictly convex we have that $\psi''(x)>0$ for all $x>0$.

We then compute
\begin{align*}
\Phi'(0)=\frac{1}{\psi'(\Phi(0)+)}=\frac{1}{\psi'(0+)}<\infty
\end{align*}
and 
\begin{align*}
\Phi''(0)=-\frac{\psi''(\Phi(q)+) \Phi'(q)}{\psi'(\Phi(q)+)^2}\bigg|_{q=0}=-\frac{\psi''(0+)}{\psi'(0+)^3}.
\end{align*}
From the L\'evy--It\^o decomposition of $X$ we know that
\begin{align*}
\psi''(0+)=\sigma^2+\int_{(-\infty,0)}x^2 \Pi(dx)=\sigma^2 +\int_{(-\infty,-1)}x^2 \Pi(dx)+\int_{(-1,0)}x^2 \Pi(dx)<\infty,
\end{align*}
where the last inequality holds by assumption \eqref{eq:assumptionofPi} and from the fact that $\int_{(-1,0)}x^2\Pi(dx)<\infty$ since $\Pi$ is a L\'evy measure. Then we have that $\Phi''(0)>-\infty$ and hence $\E_x(g)<\infty$ for all $x \in \R$.
\end{proof}

Now we are ready to state the equivalence between the optimal prediction problem and an optimal stopping problem mentioned earlier. This equivalence is mainly based on the work of \cite{urusov2005property}. 
\begin{lemma}
Consider the standard optimal stopping problem
\begin{align}
\label{eq:optimalstopping}
V=\inf_{\tau\in \mathcal{T}} \E\left( \int_0^{\tau} G(X_s)ds\right),
\end{align}
where the function $G$ is given by $G(x)=2\psi'(0+)W(x)-1$ for $x\in \R$. Then the stopping time which minimises \eqref{eq:optimalprediction} is the same which minimises \eqref{eq:optimalstopping}. In particular,

\begin{align}
V_*=V+\E(g).
\end{align}
\end{lemma}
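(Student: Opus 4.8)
The plan is to derive an elementary pathwise identity that rewrites $|g-\tau|$ as the integral of an integrand plus a term that does not depend on $\tau$, then pass to expectations and use the Markov property of $X$ together with the first-passage identity \eqref{eq:tau0finite} to recognise the integrand as $G(X_s)$. Once the two objective functionals are shown to differ by the constant $\E(g)$, the equality of the minimisers and the relation $V_*=V+\E(g)$ follow immediately. This is in the spirit of the reduction in \cite{urusov2005property}.

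First I would record that, for every realisation and every $\tau\in\mathcal{T}$,
\[
|g-\tau|=\int_0^\tau\bigl(1-2\,\mathbf{1}_{\{s<g\}}\bigr)\,ds+g .
\]
This follows by noting that $\tfrac{d}{d\tau}|g-\tau|=\sign(\tau-g)=1-2\,\mathbf{1}_{\{\tau<g\}}$ together with the boundary value $|g-0|=g$ (here $g\geq 0$ is what makes the boundary term equal to $g$), or equivalently by splitting $|g-\tau|=(\tau-g)^+ + (g-\tau)^+$ and writing each part as an integral of an indicator. Taking expectations over the stopping times with $\E|g-\tau|<\infty$ (for the others both problems equal $+\infty$ and there is nothing to prove), and using $\E(g)<\infty$ from Lemma \ref{cor:moments}, gives $\E|g-\tau|=\E\bigl(\int_0^\tau(1-2\,\mathbf{1}_{\{s<g\}})\,ds\bigr)+\E(g)$. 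Since $\{s<\tau\}\in\F_s$ because $\tau$ is a stopping time, Fubini's theorem and conditioning on $\F_s$ yield
\begin{align*}
\E\left(\int_0^\tau\bigl(1-2\,\mathbf{1}_{\{s<g\}}\bigr)\,ds\right)
&=\int_0^\infty \E\left(\mathbf{1}_{\{s<\tau\}}\bigl(1-2\,\P(g>s\mid\F_s)\bigr)\right)ds\\
&=\E\left(\int_0^\tau\bigl(1-2\,\P(g>s\mid\F_s)\bigr)\,ds\right).
\end{align*}

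The heart of the argument is the identification of the conditional probability. Since $g=\sup\{t\geq 0:X_t\leq 0\}$, on $\F_s$ the event $\{g>s\}$ is, by the Markov property, the event that the process started afresh from $X_s$ ever returns to $(-\infty,0]$; by spatial homogeneity this has probability $\P_{X_s}(\tau_0^-<\infty)$, which by \eqref{eq:tau0finite} equals $1-\psi'(0+)W(X_s)$ (recall $W\equiv 0$ on $(-\infty,0)$, so the formula also covers $X_s<0$). Hence $1-2\,\P(g>s\mid\F_s)=2\psi'(0+)W(X_s)-1=G(X_s)$, and substituting back gives $\E|g-\tau|=\E\bigl(\int_0^\tau G(X_s)\,ds\bigr)+\E(g)$ for every admissible $\tau$. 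As $\E(g)$ does not depend on $\tau$, the minimisers of \eqref{eq:optimalprediction} and \eqref{eq:optimalstopping} coincide, and taking the infimum yields $V_*=V+\E(g)$.

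The main obstacle I expect lies not in the algebra but in the rigorous justification of the interchanges and of the conditional-probability computation. On the Fubini side one must control integrability uniformly; this is exactly where finiteness of $\E(g)$ and the restriction to $\E|g-\tau|<\infty$ enter, since then $\int_0^\tau(1-2\,\mathbf{1}_{\{s<g\}})\,ds=|g-\tau|-g$ is integrable. On the probabilistic side the delicate point is the boundary behaviour: one must verify that replacing the event $\{X_t\leq 0\}$ by the strict first-passage time $\tau_0^-=\inf\{t>0:X_t<0\}$ does not alter the conditional probability for a.e.\ $s$. This is precisely where spectral negativity is used, through creeping upwards and the exclusion of the compound-Poisson case (Remark \ref{rem:lastzero}), to discard the measure-zero discrepancies.
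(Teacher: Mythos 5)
Your proposal is correct and follows essentially the same route as the paper: the pathwise identity $|g-\tau|=g+\int_0^\tau(2\,\mathbf{1}_{\{g\le s\}}-1)\,ds$, Fubini plus conditioning on $\F_s$, and the identification $\P(g\le s\mid\F_s)=\psi'(0+)W(X_s)$ via the Markov property and \eqref{eq:tau0finite}, with Remark \ref{rem:lastzero} handling the boundary issue. The only cosmetic difference is that you work with the complementary event $\{g>s\}$ and add a brief (and reasonable) remark on discarding stopping times with $\E|g-\tau|=\infty$, which the paper leaves implicit.
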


\begin{proof}

Fix any stopping time of $\mathbb{F}$. We then have

\begin{align*}
|g-\tau|&=(\tau-g)^++(\tau-g)^-\\
&=(\tau-g)^++g-(\tau \wedge g) \\
&=\int_0^{\tau} \I_{\{g\leq s \}}ds+g-\int_0^{\tau} \I_{\{g>s \}}ds\\
&=\int_0^{\tau} \I_{\{g\leq s \}}ds+g-\int_0^{\tau} [1-\I_{\{g\leq s \}}]ds\\
&=g+ \int_0^{\tau}[2\I_{\{ g\leq s\}}-1]ds.
\end{align*}
From Fubini's Theorem we have

\begin{align*}
\E\left[\int_0^{\tau} \I_{\{ g\leq s\}} ds\right]&=\E\left[\int_0^{\infty} \I_{\{s<  \tau \}}\I_{\{ g\leq s\}} ds\right]\\
&=\int_0^{\infty} \E[\I_{\{s<\tau \}}\I_{\{ g\leq s\}}]ds\\
&=\int_0^{\infty}\E[ \E[\I_{\{s<\tau \}}\I_{\{ g\leq s\}}|\F_s]]ds\\
&=\int_0^{\infty}\E[\I_{\{s<\tau \}}  \E[\I_{\{ g\leq s\}}|\F_s]]ds\\
&=\E\left[ \int_0^{\infty}\I_{\{s<\tau \}}  \E[\I_{\{ g\leq s\}}|\F_s]ds \right]\\
&=\E\left[ \int_0^{\tau}  \P( g\leq s |\F_s)ds \right].
\end{align*}

Note that due to Remark \ref{rem:lastzero}, the event $\{g\leq s\}$ is equal to $\{ X_u \geq 0 \text{ for all } u\in [s,\infty)\}$ (up to a $\P$-null set). Hence, since $X_s$ is $\F_s$-measurable,
\begin{align*}
\P(g\leq s|\F_s)&=\P(X_u >0 \text{ for all } u\in [s,\infty)|\F_s )\\
&=\P\left(\inf_{u \geq s} X_u \geq 0|\F_s\right) \\
&=\P\left(\inf_{u \geq s} (X_u-X_s) \geq -X_s|\F_s\right) \\
&=\P\left(\inf_{u \geq  0} \widetilde{X}_u \geq -X_s|\F_s\right), 
\end{align*}
where $\widetilde{X}_u=X_{s+u}-X_s$ for $u\geq 0$. From the Markov property for L\'evy processes we have that $\widetilde{X}=(\widetilde{X}_u,u\geq 0)$ is a L\'evy process with the same law as $X$, independent of $\F_s$. We therefore find that
\begin{align*}
\P(g\leq s|\F_s)&=h(X_s),
\end{align*}
where $h(b)=\P(\inf_{u \geq 0} X_u \geq -b)$. 
Note that the event $\{ \inf_{u \geq 0} X_u \geq 0\}$ is equal to $\{\tau_{0}^-=\infty\}$ where $\tau_{0}^-=\inf\{s>0: X_s < 0\}$. Hence, by the spatial homogeneity of L\'evy processes
\begin{align*}
h(b)&=\P(\inf_{u \geq 0} X_u \geq -b)\\
&=\P_{b} (\inf_{u \geq 0} X_u \geq 0)\\
&=\P_b(\tau_{0}^-=\infty)\\
&=[1-\P_b(\tau_{0}^-<\infty)]\\
&=\psi'(0+) W(b),
\end{align*}
where the last equality holds by identity \eqref{eq:tau0finite} and the fact that $\psi'(0+)> 0$. Therefore,
\begin{align*}
V_*&=\inf_{\tau\in \mathcal{T}} \E(|g-\tau|)\\
&=\E(g)+\inf_{\tau \in \mathcal{T}} \left\{2 \E\left(  \int_0^{\tau} \I_{\{ g\leq s\}}ds  \right)-\E(\tau)\right\}\\
&=\E(g)+\inf_{\tau \in \mathcal{T}} \left\{2 \E\left(\int_0^{\tau} \P(g\leq s|\F_s)ds  \right)-\E(\tau)\right\}\\
&=\E(g)+\inf_{\tau \in \mathcal{T}} \left\{2 \E\left(\int_0^{\tau} h(X_s)ds  \right)-\E(\tau)\right\}\\
&=\E(g)+\inf_{\tau \in \mathcal{T}} \left\{ \E\left(\int_0^{\tau} [2h(X_s) -1]ds \right)\right\}.
\end{align*}

Hence,
\begin{align*}
V_*&=\E(g)+\inf_{\tau \in \mathcal{T}} \left\{ \E\left(\int_0^{\tau} [2\psi'(0+)W(X_s)-1]ds \right)\right\}\\
&=\E(g)+\inf_{\tau \in \mathcal{T}} \left\{ \E\left(\int_0^{\tau} G(X_s)ds \right)\right\}.
\end{align*}

\end{proof}

To find the solution of the optimal stopping problem (\ref{eq:optimalstopping}) we will expand it to an optimal stopping problem for a strong Markov process $X$ with starting value $X_0=x\in\mathbb{R}$. Specifically, we define the function $V: \R \mapsto \R$ as 
\begin{align}
\label{eq:optimalstoppingforallx}
V(x)=\inf_{\tau\in \mathcal{T} } \E_x\left( \int_0^{\tau} G(X_s)ds\right).
\end{align}
Thus, 
\begin{align*}
V_*=V(0)+\E(g).
\end{align*}

\begin{rem}
Note that the distribution function of $-\underline{X}_{\infty}$ is given by 
\begin{align*}
F(x)=\P(-\underline{X}_{\infty} \leq x)=\P_x(\tau_0^-=\infty)=\psi'(0+)W(x).
\end{align*}
Hence the function $G$ can be written in terms of $F$ as $G(x)=2F(x)-1$.

\end{rem}

Let us now give some intuition about the optimal stopping problem \eqref{eq:optimalstoppingforallx}. For this define $x_0$ as the lowest value $x$ such that $G(x)\geq 0$, i.e.

\begin{align}
\label{eq:firstzeroofG}
x_0=\inf\{x\in \R:G(x)\geq 0 \}.
\end{align}  
We know that $W$ is continuous and strictly increasing on $[0,\infty)$ and vanishes on $(-\infty,0)$. Moreover, we have that $\lim_{x\rightarrow \infty} W(x)=1/\psi'(0+)$ (since $F(x)=\psi'(0+)W(x)$ is a distribution function). As a consequence we have that $G$ is a strictly increasing and continuous function on $[0,\infty)$ such that $G(x)=-1$ for $x<0$ and $G(x)\xrightarrow{x\rightarrow \infty} 1$. In the same way as $W$, $G$ may have a discontinuity at zero depending of the path variation of $X$. From the fact that $G(x)=-1$ for $x<0$ and the definition of $x_0$ given in \eqref{eq:firstzeroofG} we have that $x_0\geq 0$.

The above observations tell us that, to solve the optimal stopping problem \eqref{eq:optimalstoppingforallx}, we are interested in a stopping time such that before stopping, the process $X$ spent most of the time in those values where $G$ is negative, taking into account that $X$ can pass some time in the set $\{ x \in \R: G(x)>0\}$ and then return back to the set $\{x \in \R: G(x)\leq 0 \}$.\\

It therefore seems reasonable to think that a stopping time which attains the infimum in \eqref{eq:optimalstoppingforallx} is of the form,

\begin{align*}
\tau_a^+=\inf\{t>0: X_t> a \}
\end{align*}
for some $a \in \R$.

 
The following theorem is the main result of this work. It confirms the intuition above and links the optimal stopping level with the median of the convolution with itself of the distribution function of $-\underline{X}_{\infty}$.

\begin{thm}
\label{thm:maintheorem}
Suppose that $X$ is a spectrally negative L\'evy process drifting to infinity with L\'evy measure $\Pi$ satisfying 
\begin{align*}
\int_{(-\infty,-1)}x^2\Pi(dx)<\infty.
\end{align*}
Then there exists some $a^*\in [x_0,\infty)$ such that an optimal stopping time in \eqref{eq:optimalstoppingforallx} is given by
\begin{align*}
\tau^*=\inf\{t\geq 0: V(X_t)=0 \}=\inf\{t\geq 0: X_t\geq a^*\}.
\end{align*}
The optimal stopping level $a^*$ is defined by 

\begin{align}
\label{eq:characterisationofa}
a^*=\inf\{x\in \R : H(x)\geq 1/2 \}
\end{align}
where $H$ is the convolution of $F$ with itself, i.e.,
\begin{align*}
H(x)=\int_{[0,x]} F(x-y)dF(y)
\end{align*}
Furthermore, $V$ is a non-decreasing, continuous function satisfying the following:

\begin{itemize}
\item[$i)$] If $X$ is of infinite variation or finite variation with 

\begin{align}
\label{eq:rhocondition}
F(0)^2 < 1/2,
\end{align}
then $a^*>0$ is the median of the distribution function $H$, i.e. is the unique value which satisfies the following equation

\begin{align}
\label{eq:aoptimal}
H(a^*)= \int_{[0,a^*]} F(a^*-y)dF(y)=\frac{1}{2}
\end{align}
The value function is given by

\begin{align}
\label{eq:valuefunctionintermsofscale}
V(x)=\frac{2}{\psi'(0+)}\int_x^{a^*} H(y)dy-\frac{a^*-x}{\psi'(0+)}\I_{\{x\leq a^* \}}
\end{align}

Moreover, there is smooth fit at $a^*$ i.e. $V'(a^*-)=0=V'(a^*+)$.

\item[$ii)$] If $X$ is of finite variation with $F(0)^2 \geq  1/2$ then $a^*=0$ and 
\begin{align*}
V(x)=\frac{x}{\psi'(0+)}\I_{\{x\leq 0 \}}.
\end{align*}

 In particular, there is continuous fit at $a^*=0$ i.e. $V(0-)=0$ and there is no smooth fit at $a^*$ i.e. $V'(a^*-)>0$.

\end{itemize}
\end{thm}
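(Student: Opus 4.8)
The plan is to solve the optimal stopping problem \eqref{eq:optimalstoppingforallx} by the direct (verification-free) method, exploiting the monotone, integrable structure of $G$. First I would establish that for any candidate level $a\geq x_0$ the reward associated with the first-passage strategy $\tau_a^+$ can be computed explicitly using the $0$-potential measure $R^{(0)}$. Since $G(x)=2F(x)-1$ and $X$ drifts to infinity, $\tau_a^+<\infty$ $\P_x$-a.s., and by the strong Markov property it suffices to compute, for $x\leq a$,
\begin{align*}
J_a(x):=\E_x\left(\int_0^{\tau_a^+} G(X_s)\,ds\right)=\int_{(-\infty,a]} G(y)\,r^{(0)}(a,x,y)\,dy,
\end{align*}
where by \eqref{eq::qpotentialkillinglessa} with $q=0$ (and $\Phi(0)=0$) the density is $r^{(0)}(a,x,y)=W(a-y)-W(x-y)$. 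The first key step is therefore to insert $G(y)=2\psi'(0+)W(y)-1$ into this integral and, via a Fubini/integration-by-parts manipulation together with the identity $F=\psi'(0+)W$, recognise the double integral $\int W(a-y)W(y)\,dy$ as the convolution $H$. This is what will make the convolution $H(x)=\int_{[0,x]}F(x-y)\,dF(y)$ appear and will ultimately yield the closed form \eqref{eq:valuefunctionintermsofscale} for $J_a$.

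Next I would optimise over the level $a$. Differentiating $J_a(x)$ in $a$ (the candidate value is $V(x)=\inf_a J_a(x)$, and for fixed $x$ the optimiser is found by a first-order condition) should produce a stationarity equation whose factor is exactly $2H(a)-1$; setting this to zero gives the characterisation $H(a^*)=1/2$, i.e. \eqref{eq:aoptimal}, identifying $a^*$ as the median of $H$. Here the dichotomy between the two cases enters: since $H(0)=F(0)^2$ (the convolution evaluated at $0$ picks up the atom of $dF$ at $0$, which is present precisely in the finite-variation case where $F(0)=\psi'(0+)W(0)=\psi'(0+)/d>0$), the equation $H(a^*)=1/2$ has a strictly positive solution iff $F(0)^2<1/2$; otherwise the infimum is attained at the boundary $a^*=0$, giving case $(ii)$. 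I would verify that $H$ is continuous and strictly increasing on the relevant range so that $a^*$ in \eqref{eq:characterisationofa} is well defined and unique.

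With the candidate $a^*$ and candidate value function $V$ in hand, the final step is verification: I would confirm that $V$ as defined in \eqref{eq:valuefunctionintermsofscale} (resp. the expression in $(ii)$) satisfies $V\leq 0$ with $V=0$ on $[a^*,\infty)$, that $V$ is non-decreasing and continuous, and that it is a solution of the associated variational inequality, so that $\tau^*=\inf\{t\geq 0:X_t\geq a^*\}=\inf\{t\geq 0:V(X_t)=0\}$ is genuinely optimal. For this I would check that $x\mapsto V(x)$ together with the running integral $\int_0^t G(X_s)\,ds$ makes $V(X_{t})+\int_0^{t}G(X_s)\,ds$ a submartingale in general and a martingale up to $\tau^*$, using Itô/Meyer--Itô for the spectrally negative process; the $C^1$ smoothness needed for this is precisely the smooth-fit statement $V'(a^*-)=V'(a^*+)=0$, which should drop out of the stationarity condition $2H(a^*)-1=0$ and the explicit form of $V'$. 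In case $(ii)$ one instead checks only continuous fit $V(0-)=0$ and computes $V'(0-)>0$ directly. The fit assertions follow by differentiating \eqref{eq:valuefunctionintermsofscale}: $V'(x)=\tfrac{1}{\psi'(0+)}\big(1-2H(x)\big)$ on $(0,a^*)$, so $V'(a^*-)=0$ in case $(i)$, while in case $(ii)$ the boundary value $2H(0)-1=2F(0)^2-1\geq 0$ forces $V'(0-)>0$.

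The main obstacle I anticipate is the verification step rather than the computation of $J_a$: establishing the submartingale property of $V(X_t)+\int_0^tG(X_s)\,ds$ globally (not just in the continuation region) requires care because $V$ need not be $C^2$ across $0$ in the finite-variation case and because one must control the behaviour of $V$ on $(-\infty,0)$ where $G\equiv-1$. Handling the boundary term at $0$ — where $W$, hence $F$ and $G$, may jump — and ensuring the Itô expansion has no unwanted local-time or jump contributions that would break the submartingale inequality is the delicate part; the two-case split in the theorem is essentially a reflection of exactly this boundary subtlety, so getting the fit conditions and the sign of the generator applied to $V$ right at $x=0$ is where the real work lies.
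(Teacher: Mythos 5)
Your plan is workable, but it follows a genuinely different route from the paper for the structural half of the argument. The paper never verifies anything: it first shows via the Snell envelope (Lemma \ref{lemma:Optimaltau}) that an optimal rule is the entry time into $D=\{x:V(x)=0\}$, then proves $V$ is non-decreasing, finite, continuous and eventually zero (Lemmas \ref{lemma:vzero} and \ref{lemma:continuityofV}), so that $D=[a,\infty)$ and the optimal rule is \emph{a priori} a first-passage time; after that, minimising $a\mapsto V_a(x)$ (Lemmas \ref{lemma:formofV} and \ref{lemma:aoptimal}) finishes the proof with no It\^o formula and no variational inequality. You instead compute $V_a$, optimise over $a$, and then propose to prove optimality among \emph{all} stopping times by the submartingale/verification argument — exactly the alternative the paper relegates to the closing remark of Section \ref{sec_proof} as ``rather more involved''. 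Your computation of $J_a$ via the $0$-potential density \eqref{eq::qpotentialkillinglessa} is a clean and correct substitute for the paper's Wiener--Hopf computation with $q\downarrow 0$ and \eqref{eq:densityofrunninginfimum} (the formally divergent term $\int_{-\infty}^a[W(a-y)-W(x-y)]\,dy$ does converge to $(a-x)/\psi'(0+)$), and your first-order condition, the dichotomy via $H(0)=F(0)^2$, and the fit statements all match \eqref{eq:aoptimal}--\eqref{eq:valuefunctionintermsofscale}. What your route buys is independence from the Snell-envelope machinery; what it costs is the verification step, whose decisive ingredient you have not supplied: beyond the $C^1$/$C^2$ regularity of $V_{a^*}$ away from $\{0,a^*\}$, you must prove the generator inequality $\int_{(-\infty,0)}V_{a^*}(x+y)\,\Pi(dy)+G(x)\geq 0$ throughout the stopping region $x>a^*$ (not only near $x=0$), which in the smooth-fit case is extracted by letting $x\uparrow a^*$ in the Poisson equation $\mathbb{L}_XV_{a^*}=-G$ and using monotonicity of $G$ and of $V_{a^*}$, and in case $(ii)$ reduces to the algebraic inequality $2F(0)^2-1\geq 0$. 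Until that inequality is established, the first-order condition on $a$ only identifies the best threshold strategy, not the solution of \eqref{eq:optimalstoppingforallx}.
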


\begin{rem}
\label{rem:main:thm}
\begin{itemize}
\item[$i)$] Note that since $F$ corresponds to the distribution function of $-\underline{X}_{\infty}$, $H$ can be interpreted as the distribution function of $Z=-\underline{X}_{\infty}-\underline{Y}_{\infty}$ where $-\underline{Y}_{\infty}$ is an independent copy of $-\underline{X}_{\infty}$. Moreover, $H$ can be written in terms of scale functions as

\begin{align}
\label{eq:alternativeexpresionH}
H(x)=\psi'(0+)^2W(x)W(0)+\psi'(0+)^2\int_0^x W(y)W'(x-y)dy
\end{align}
and then equation \eqref{eq:aoptimal} reads.

\begin{align*}
\psi'(0+)^2\left[  W(a^*)W(0)+\int_0^x W(y)W'(a^*-y)dy\right]=\frac{1}{2}
\end{align*}
Using Fubini's Theorem the value function takes the form 

\begin{align}
\label{eq:alternativeexpresionV}
V(x)=\left( 2\psi'(0+) \int_0^{a^*}W(y)W(a^*-y)dy-2\psi'(0+) \int_0^{x}W(y)W(x-y)dy-\frac{a^*-x}{\psi'(0+)}\right)\I_{\{x\leq a^* \}}.
\end{align}

\item[$ii)$] 
Note that in the case that $X$ is of finite variation the condition  $F(0)^2 \geq  1/2$ is equivalent to $0 > \int_{(-\infty,0)}x\Pi(dx) /d \geq 1/\sqrt{2}-1 $ (since $F(0)=\psi'(0+)/d=1+\int_{(-\infty,0)}x\Pi(dx)/d$ and $\int_{(-\infty,0) } x\Pi(dx)\leq 0$) so the condition given in $ii)$ tells us that the drift $d$ is much larger than the average size of the jumps. This implies that the process drifts quickly to infinity and then we have to stop the first time that the process $X$ is above zero. In this case, concerning the optimal prediction problem, the stopping time which is nearest (in the $L^1$ sense) to the last time that the process is below zero is the first time that the process is above the level zero.

\item[$iii)$] If $X$ is of finite variation with $F(0)^2<1/2$ then $\int_{(-\infty,0)}x\Pi(dx) /d  < 1/\sqrt{2}-1<0$ we have that the average of size of the jumps of $X$ are sufficiently large such that when the process crosses above the level zero the process is more likely (than in $ii)$) that the process $X$ jumps again below and spend more time in the region where $G$ is negative. This condition also tells us that the process $X$ drifts a little slower to infinity that in the $ii)$. The stopping time which is nearest (in the $L^1$ sense) to the last time that the process is below zero is the first time that the process is above the level $a^*$.

\end{itemize}

\end{rem}

\section{Proof of Main Result}
\label{sec_proof}
In the next section we proof Theorem \ref{thm:maintheorem} using a direct method. Since proof is rather long, we break it into a number of lemmas.

In particular, we will use the general theory of optimal stopping (see \cite{peskir2006optimal}) to get a direct proof of Theorem \ref{thm:maintheorem}. First, using the Snell envelope we will show that an optimal stopping time for \eqref{eq:optimalstoppingforallx} is the first time that the process enters to a stopping set $D$, defined in terms of the value function $V$. Recall the set 

\begin{align*}
\mathcal{T}_t=\{\tau \geq t: \tau \text{ is a stopping time}\}.
\end{align*}

We denote $\mathcal{T}=\mathcal{T}_0$ as the set of all stopping times.

The next Lemma is standard in optimal stopping and we include the proof for completeness.

\begin{lemma}
\label{lemma:Optimaltau}
Denoting by $D=\{x\in \R:V(x)=0 \}$ the stopping set, we have that for any $x\in \R$ the stopping time 
\begin{align*}
\tau_{D}=\inf\{t\geq 0: X_t\in D \}
\end{align*}
attains the infimum in $V(x)$, i.e. $V(x)= \E_x\left( \int_0^{\tau_D} G(X_s)ds\right)$.
\end{lemma}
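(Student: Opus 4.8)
The plan is to establish Lemma \ref{lemma:Optimaltau} via the classical Snell envelope machinery, exactly as laid out in \cite{peskir2006optimal}. First I would reformulate the problem so that the integrand is nonnegative and the reward structure fits the standard template: writing $Y_t = \int_0^t G(X_s)\,ds$, the object $V(x) = \inf_{\tau} \E_x(Y_\tau)$ is a minimisation of an additive functional. Since $G$ is bounded below by $-1$ and bounded above by $1$, and since $\E_x(g) < \infty$ by Lemma \ref{cor:moments}, the family $\{Y_\tau : \tau \in \mathcal{T}\}$ is uniformly integrable (indeed $\E_x\left(\int_0^\infty |G(X_s)|\,ds\right)$ is controlled, since $G(X_s)+1 = 2h(X_s)$ relates to the probability that $g \leq s$, whose integral is $\E_x(g)$). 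This integrability is what guarantees the infimum is attained and that the optimal stopping theory applies without horizon truncation.

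Next I would verify the three standard ingredients. The value function $V$ is finite (it lies between $-\E_x(g)$-type bounds and $0$, the latter from $\tau \equiv 0$), and lower semicontinuous or at least measurable; the process $X$ is strong Markov, as recalled in Section \ref{sec_prereqs}. The heart of the argument is to identify the Snell envelope of the payoff with $V(X_t)$ composed along the path, i.e.\ to show that $S_t := V(X_t) + \int_0^t G(X_s)\,ds$ is the smallest right-continuous supermartingale dominating $\int_0^t G(X_s)\,ds$ from below (in the minimisation convention). By the general theory, $\{S_t\}$ is a supermartingale, and the first time it loses the martingale property is precisely the first entry time into the set where $V$ equals its ``stopping value'' zero, namely $D = \{x : V(x)=0\}$. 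The stopping set $D$ is where immediate stopping is optimal, characterised by the equality $V(x)=0$.

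I would then argue optimality of $\tau_D$ in two directions. For the inequality $\E_x\left(\int_0^{\tau_D} G(X_s)\,ds\right) \leq V(x)$, observe $\tau_D \in \mathcal{T}$ so this is immediate from the definition of the infimum (giving $\geq$ in fact for $V$); for the reverse, I would use that the stopped process $S_{t \wedge \tau_D}$ is a martingale up to $\tau_D$ (the supermartingale is ``flat'' on the continuation region $\{V < 0\}$ by the Markovian structure), so that optional stopping gives $V(x) = \E_x(S_{\tau_D}) = \E_x\left(V(X_{\tau_D}) + \int_0^{\tau_D} G(X_s)\,ds\right)$, and since $X_{\tau_D} \in D$ forces $V(X_{\tau_D}) = 0$, we conclude $V(x) = \E_x\left(\int_0^{\tau_D} G(X_s)\,ds\right)$. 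The passage to the limit in the optional stopping step must be justified by the uniform integrability established at the outset, together with the finiteness $\P_x(\tau_D < \infty)=1$ (which follows since $X$ drifts to infinity and $D$ contains a half-line of the form $[a^*,\infty)$, though at this stage one need only know $D$ is nonempty and entered almost surely).

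The main obstacle I anticipate is the rigorous justification of the martingale-stopping argument at the possibly infinite stopping time $\tau_D$: one must ensure that $V(X_{\tau_D \wedge t}) + \int_0^{\tau_D \wedge t} G(X_s)\,ds$ converges in $L^1$ as $t \to \infty$, which requires controlling $\E_x\left(\int_0^{\tau_D} G^-(X_s)\,ds\right)$ and the tail behaviour of $V(X_t)$ on $\{\tau_D = \infty\}$. Since $X$ drifts to $+\infty$, on the event $\{\tau_D=\infty\}$ the process would have to avoid $D$ forever, which should be a null event given that $D$ absorbs large values; establishing $\P_x(\tau_D<\infty)=1$ cleanly, and the associated uniform integrability, is the delicate point. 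Everything else is an application of the standard Snell-envelope theorem for lower-semicontinuous reward functionals of a strong Markov process.
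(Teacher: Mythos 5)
Your proposal follows essentially the same route as the paper: both identify the Snell envelope of the payoff with $\int_0^t G(X_s+x)\,ds + V(X_t+x)$ via the Markov property and spatial homogeneity, and then invoke the general theory of optimal stopping to conclude that the first entrance time into the coincidence set $D=\{V=0\}$ attains the infimum. One correction to your integrability remark: $\E_x\bigl(\int_0^\infty |G(X_s)|\,ds\bigr)=\infty$ because $G(X_s)\to 1$ as $s\to\infty$; the condition the theory actually needs is one-sided, namely $\E_x\bigl(\int_0^\infty G^-(X_s)\,ds\bigr)\le \E_x(g_{x_0})<\infty$, which is precisely the bound the paper establishes in the lemma immediately following this one.
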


\begin{proof}
From the general theory of optimal stopping consider the Snell envelope defined as

\begin{align*}
S_t^x=\essinf_{\tau \in \mathcal{T}_t} \E\left(\int_0^{\tau} G(X_s+x)ds\bigg|\F_t\right)
\end{align*}
and define the stopping time 

\begin{align*}
\tau^*_x=\inf\left\{t\geq 0: S_t^x=\int_0^t G(X_s+x)ds \right\}.
\end{align*}
Then we have that the stopping time is $\tau^*_x$ is optimal for 

\begin{align}
\label{eq:axuliaryresult10}
\inf_{\tau \in \mathcal{T}} \E\left(\int_0^{\tau} G(X_s+x)ds \right).
\end{align}
On account of the Markov property we have 

\begin{align*}
S_t^x&=\essinf_{\tau \in \mathcal{T}_t} \E\left(\int_0^{\tau} G(X_s+x)ds\bigg|\F_t\right)\\
&=\int_0^t G(X_s+x)ds+\essinf_{\tau \in \mathcal{T}_t} \E\left(\int_0^{\tau} G(X_s+x)ds-\int_0^t G(X_s+x)ds\bigg|\F_t\right)\\
&=\int_0^t G(X_s+x)ds+\essinf_{\tau \in  \mathcal{T}_t} \E\left(\int_t^{\tau} G(X_s+x)ds\bigg|\F_t\right)\\
&=\int_0^t G(X_s+x)ds+\essinf_{\tau \in  \mathcal{T}_t} \E\left(\int_0^{\tau-t} G(X_{s+t}+x)ds\bigg|\F_t\right)\\
&=\int_0^t G(X_s+x)ds+\essinf_{\tau \in \mathcal{T}} \E_{X_t}\left(\int_0^{\tau} G(X_{s}+x)ds\right)\\
&=\int_0^t G(X_s+x)ds+V(X_t+x),
\end{align*}
where the last equality follows from the spatial homogeneity of L\'evy processes and from the definition of $V$. Therefore $\tau_x^*=\inf\{t\geq 0: V(X_t+x)=0 \}$. So we have
\begin{align*}
\tau_x^*=\inf\{t\geq 0: X_t+x \in D\}
\end{align*}
Thus

\begin{align*}
V(x)&=\inf_{\tau \in \mathcal{T}}\E_x\left( \int_0^{\tau} G(X_t)dt\right)\\
&=\inf_{\tau \in  \mathcal{T}}\E\left( \int_0^{\tau} G(X_t+x)dt\right)\\
&=\E\left( \int_0^{\tau_{x}^*} G(X_t+x)dt\right)\\
&=\E_x\left(\int_0^{\tau_{D}} G(X_t)dt \right),
\end{align*}
where the third equality holds since $\tau_x^*$ is optimal for \eqref{eq:axuliaryresult10} and the fourth follows from the spatial homogeneity of L\'evy processes. Therefore the stopping time $\tau_{D}$ is the optimal stopping time for $V(x)$ for all $x\in \R$.
\end{proof}

Next, we will prove that $V(x)$ is finite for all $x\in \R$ which implies that there exists a stopping time $\tau_*$ such that the infimum in \eqref{eq:optimalstoppingforallx} is attained. Recall the definition of $x_0$ in (\ref{eq:firstzeroofG}).

\begin{lemma}
The function $V$ is non-decreasing with $V(x) \in (-\infty,0]$ for all $x\in \R$. In particular, $V(x)<0$ for any $x \in (-\infty,x_0)$.
\end{lemma}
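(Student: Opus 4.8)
The plan is to establish three claims: first that $V(x) \le 0$ everywhere, then that $V$ is non-decreasing, and finally that $V(x) < 0$ strictly on $(-\infty, x_0)$. The upper bound $V(x) \le 0$ is immediate from the definition \eqref{eq:optimalstoppingforallx}: taking the trivial stopping time $\tau \equiv 0$ gives $\E_x\left(\int_0^0 G(X_s)ds\right) = 0$, and since $V$ is an infimum over all stopping times, $V(x) \le 0$. To obtain finiteness, i.e. $V(x) > -\infty$, I would exhibit a single stopping time whose payoff is bounded below. The natural candidate is $\tau_{x_0}^+$, the first passage time above $x_0$; for $s < \tau_{x_0}^+$ we have $X_s \le x_0$, so the integrand $G(X_s)$ is bounded (indeed $G$ is bounded between $-1$ and $1$ everywhere), and the key point is that $\E_x(\tau_{x_0}^+) < \infty$. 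This expected passage time is finite because $X$ drifts to infinity with $\psi'(0+) > 0$; one can use the identity $\E_x(\tau_{x_0}^+) = (x_0 - x)^+/\psi'(0+)$ or simply the fact that the overall ascending ladder structure gives a finite mean (this parallels the finiteness argument in Lemma \ref{cor:moments}). Since $G \ge -1$, we get $V(x) \ge \E_x\left(\int_0^{\tau_{x_0}^+} G(X_s)ds\right) \ge -\E_x(\tau_{x_0}^+) > -\infty$.

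For monotonicity, I would use the spatial homogeneity of the L\'evy process together with the fact that $G$ is non-decreasing. Concretely, for $x \le x'$ and any stopping time $\tau$, rewrite $\E_x\left(\int_0^\tau G(X_s)ds\right) = \E\left(\int_0^\tau G(X_s + x)ds\right)$ using the shift, exactly as in the proof of Lemma \ref{lemma:Optimaltau}. Since $G$ is non-decreasing on $\R$ and $x \le x'$ implies $G(X_s + x) \le G(X_s + x')$ pointwise, we obtain $\E\left(\int_0^\tau G(X_s + x)ds\right) \le \E\left(\int_0^\tau G(X_s + x')ds\right)$ for every $\tau$. Taking the infimum over $\tau \in \mathcal{T}$ on both sides preserves the inequality, giving $V(x) \le V(x')$. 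This is clean because the same stopping-time family $\mathcal{T}$ is used for both starting points once the problem is expressed via the shifted integrand.

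The strict negativity on $(-\infty, x_0)$ is the substantive part. Fix $x < x_0$. The idea is that starting below $x_0$, the process spends a genuinely positive expected amount of time in the region $\{G < 0\}$ before it can ever reach $\{G \ge 0\} = [x_0, \infty)$, so a well-chosen stopping time yields strictly negative payoff. I would take $\tau = \tau_{x_0}^+$ again: on the event $\{s < \tau_{x_0}^+\}$ we have $X_s < x_0$, hence $G(X_s) < 0$ by the definition of $x_0$ and the strict monotonicity of $G$ on $[0,\infty)$ (together with $G \equiv -1$ on $(-\infty,0)$). Therefore
\begin{align*}
V(x) \le \E_x\left(\int_0^{\tau_{x_0}^+} G(X_s)ds\right) = \E_x\left(\int_0^{\tau_{x_0}^+} G(X_s)\I_{\{X_s < x_0\}}ds\right) < 0,
\end{align*}
provided the expected time spent strictly below $x_0$ is positive, which holds because $\tau_{x_0}^+ > 0$ almost surely (the process starts strictly below $x_0$ and has right-continuous paths) and the integrand is strictly negative on $[0,\tau_{x_0}^+)$.

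The main obstacle I anticipate is making the strict inequality fully rigorous: one must ensure that $\E_x\left(\int_0^{\tau_{x_0}^+} G(X_s)ds\right)$ is strictly bounded away from zero rather than merely non-positive, which requires a quantitative lower bound on the time spent in $\{G < 0\}$. A clean way is to bound below by the time the process spends in a small neighbourhood below $x_0$ (or below zero if $x < 0$), where $G$ is bounded away from $0$ by some $\delta > 0$, and argue that this occupation time has strictly positive expectation — for instance via the potential measure $R^{(0)}$ and its density \eqref{eq::qpotentialkillinglessa}, or simply because for small $t$ the paths starting at $x$ remain below $x_0$ with positive probability. Care is also needed in the finite-variation case where $G$ may be discontinuous at $0$, but since we only use $G < 0$ on $\{X_s < x_0\}$ and $x_0 \ge 0$, the argument is unaffected.
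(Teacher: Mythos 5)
Your arguments for $V(x)\le 0$, for monotonicity, and for strict negativity on $(-\infty,x_0)$ are essentially the paper's (for the strict inequality the paper avoids your ``bounded away from zero'' worry by stopping at $\tau_{y_0}^+$ for an intermediate $y_0\in(x,x_0)$, so that $G(X_s)\le G(y_0)<0$ uniformly on $[0,\tau_{y_0}^+)$ and the payoff is at most $G(y_0)\E_x(\tau_{y_0}^+)<0$; your sketched fix amounts to the same thing). However, your finiteness argument contains a genuine error: the inequality
\begin{align*}
V(x)\ \ge\ \E_x\left(\int_0^{\tau_{x_0}^+} G(X_s)\,ds\right)
\end{align*}
goes the wrong way. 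Since $V$ is an \emph{infimum} over all stopping times, exhibiting one particular stopping time with finite payoff only gives an \emph{upper} bound on $V(x)$; it says nothing about whether some other stopping time drives the payoff to $-\infty$. Indeed a stopping time much larger than $\tau_{x_0}^+$ can continue to accumulate negative contributions every time the process jumps back below $x_0$, so $-\E_x(\tau_{x_0}^+)$ is not a valid lower bound for the infimum.

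The correct argument, which is the one the paper uses, bounds the integrand uniformly: $G(y)\ge -\I_{\{y\le x_0\}}$ for all $y$, hence for \emph{every} stopping time $\tau$,
\begin{align*}
\E_x\left(\int_0^{\tau} G(X_s)\,ds\right)\ \ge\ -\E_x\left(\int_0^{\infty}\I_{\{X_s\le x_0\}}\,ds\right)\ \ge\ -\E_x(g_{x_0}),
\end{align*}
because after the last passage time $g_{x_0}$ the process never revisits $(-\infty,x_0]$. Finiteness of $\E_x(g_{x_0})$ is exactly Lemma \ref{cor:moments}, which is where the standing assumption $\int_{(-\infty,-1)}x^2\Pi(dx)<\infty$ enters. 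The fact that your argument never invokes this hypothesis is itself a warning sign: without it the lemma's conclusion $V(x)>-\infty$ can fail, whereas $\E_x(\tau_{x_0}^+)=(x_0-x)^+/\psi'(0+)$ is always finite. You should replace the single-stopping-time bound by this uniform occupation-time bound.
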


\begin{proof}
From the spatial homogeneity of L\'evy processes,
\begin{align*}
V(x)=\inf_{\tau \in \mathcal{T}}\E \left( \int_0^{\tau} G(X_s+x)ds\right).
\end{align*}
Then, if $x_1\leq x_2$ we have $G(X_s+x_1)\leq G(X_s+x_2)$ since $G$ is a non-decreasing function (see the discussion before Theorem \ref{thm:maintheorem}). This implies that $V(x_1)\leq V(x_2)$ and $V$ is non-decreasing as claimed. If we take the stopping time $\tau \equiv 0$, then for any $x\in \R$ we have $V(x)\leq 0$. Let $x<x_0$ and let $y_0 \in (x,x_0)$ then $G(x)\leq G(y_0)<0$ and from the fact that for all $s<\tau_{y_0}^+$, $X_s\leq y_0$ we have
\begin{align*}
V(x)  \leq \E_x \left( \int_0^{\tau_{y_0}^+} G(X_s)ds\right) \leq  \E_x \left( \int_0^{\tau_{y_0}^+} G(y_0)ds\right)=G(y_0)\E_x(\tau_{y_0}^+)<0,
\end{align*}
where the last inequality holds due to $\P_x(\tau_{y_0}^+>0)>0$ and then $\E_x(\tau_{y_0}^+)>0$.

Now we will see that $V(x)>-\infty$ for all $x\in \R$. Note that $G(x) \geq -\I_{\{ x\leq x_0 \}}$ holds for all $x\in \R$ and thus

\begin{align*}
V(x)&=\inf_{\tau\in \mathcal{T}} \E_x\left(\int_0^{\tau} G(X_s)ds \right)\\
& \geq \inf_{\tau\in \mathcal{T}} \E_x \left(\int_0^{\tau} -\I_{\{ X_s \leq x_0 \}} ds \right)\\
& = - \sup_{\tau\in \mathcal{T}} \E_x \left(\int_0^{\tau} \I_{\{ X_s \leq x_0 \}} ds \right)\\
& \geq -\E_x \left( \int_0^{\infty} \I_{\{ X_s \leq x_0 \}} ds \right)\\
&\geq -\E_x(g_{x_0}),
\end{align*}
%
%
where the last inequality holds since if $s>g_{x_0}$ then $\I_{\{X_s \leq x_0\}}=0$. 

%

From Lemma \ref{cor:moments} we have that $\E_x(g_{x_0})<\infty$. Hence for all $x<x_0$ we have $V(x)\geq -\E_x(g_{x_0})>-\infty$ and due to the monotonicity of $V$, $V(x)>-\infty$ for all $x\in \R$.

\end{proof}

Next, we derive some properties of $V$ which will be useful to find the form of the set $D$.

\begin{lemma}
\label{lemma:vzero}
The set $D$ is non-empty. Moreover, there exists an $\widetilde{x}$ such that
 
\begin{align*}
V(x)=0 \qquad \text{for all } x \geq \widetilde{x}.
\end{align*} 
\end{lemma}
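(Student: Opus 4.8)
The plan is to reduce everything to the single statement $D\neq\emptyset$ and then to deduce that from Lemma~\ref{lemma:Optimaltau} by a short contradiction argument that exploits the two structural features of the problem: that $X$ drifts to $+\infty$ and that $G$ is eventually positive. Recall that we have already established that $V$ is non-decreasing with $V(x)\in(-\infty,0]$ for every $x$. Monotonicity makes $D=\{x\in\R:V(x)=0\}$ an up-set automatically: if $V(x_1)=0$ for some $x_1$, then for every $x\geq x_1$ we have $0=V(x_1)\leq V(x)\leq 0$, so $V(x)=0$. Hence it suffices to exhibit a single point of $D$, and any such $x_1$ will serve as the required $\widetilde{x}$. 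The second ingredient is that, since $X_t\to\infty$ $\P_x$-a.s. and $G(y)\to 1$ as $y\to\infty$, the integrand $G(X_s)$ is eventually bounded below by, say, $1/2$; moreover the negative part satisfies $\int_0^\infty G^-(X_s)\,ds\leq\int_0^\infty\I_{\{X_s\leq x_0\}}\,ds\leq g_{x_0}$, which is $\P_x$-a.s.\ finite by Remark~\ref{rem:lastzero} (indeed $\{t:X_t\leq x_0\}$ is a.s.\ bounded because $X_t\to\infty$). Consequently $\int_0^\infty G(X_s)\,ds=+\infty$ $\P_x$-a.s.

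With these two observations I would argue as follows. Suppose, for contradiction, that $D=\emptyset$. Then the entrance time $\tau_D=\inf\{t\geq 0:X_t\in D\}$ equals $+\infty$ identically. By Lemma~\ref{lemma:Optimaltau} the stopping time $\tau_D$ attains the infimum, so that for every $x$ we would have
\[
V(x)=\E_x\!\left(\int_0^{\tau_D}G(X_s)\,ds\right)=\E_x\!\left(\int_0^{\infty}G(X_s)\,ds\right)=+\infty,
\]
using the previous paragraph. This contradicts $V(x)\leq 0$. Hence $D\neq\emptyset$, and choosing any $x_1\in D$ and setting $\widetilde{x}=x_1$ gives $V(x)=0$ for all $x\geq\widetilde{x}$ by the up-set property.

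The point that needs care, and which I regard as the main (small) obstacle, is \emph{attainment} rather than a mere limit: one can quite easily show via the $0$-potential measure that $\lim_{x\to\infty}V(x)=0$, but this does not by itself place a point in $D$, since the value $0$ could only be approached. The clean way around this is precisely to invoke the optimality of $\tau_D$ from Lemma~\ref{lemma:Optimaltau} with the convention $\tau_D=\inf\emptyset=+\infty$; I would make sure that the Snell-envelope identity $V(x)=\E_x\int_0^{\tau_D}G(X_s)\,ds$ in that lemma is read as valid in this degenerate case (it is, because the associated $\tau_x^*=\inf\{t:V(X_t+x)=0\}$ is then a.s.\ infinite). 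Everything else is routine: the a.s.\ finiteness of the negative part of the integral follows from $X_t\to\infty$ alone, and if one prefers integrability it is furnished by Lemma~\ref{cor:moments}.
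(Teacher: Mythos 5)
Your proposal is correct and follows essentially the same route as the paper: assume $D=\emptyset$, invoke Lemma~\ref{lemma:Optimaltau} to get $V(x)=\E_x\left(\int_0^\infty G(X_s)\,ds\right)$, show this expectation is $+\infty$ because the negative part is controlled by a last passage time with finite mean (Lemma~\ref{cor:moments}) while the positive part diverges since $G(X_s)\geq 1/2$ eventually, and then conclude via the monotonicity of $V$. The only cosmetic difference is that the paper splits the integral at $g_m$ with $m=\inf\{x:G(x)\geq 1/2\}$ rather than separating positive and negative parts at $x_0$, which amounts to the same estimate.
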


\begin{proof}
Suppose that $D=\emptyset$. Then by Lemma \ref{lemma:Optimaltau} the optimal stopping time for \eqref{eq:optimalstoppingforallx} is $\tau_D=\infty$. This implies that 

\begin{align*}
V(x)=\E_x\left(\int_0^{\infty} G(X_t)dt\right).
\end{align*}
Let $m$ be the median of $G$, i.e.

\begin{align*}
m=\inf\{ x\in \R:G(x) \geq 1/2\}
\end{align*}
and let $g_m$ the last time that the process is below the level $m$ defined in \eqref{eq:lastzero}. Then

\begin{align}
\label{eq:Disnonemptyauxliaryresult}
\E_x\left(\int_0^{\infty} G(X_t)dt\right)&=\E_x\left(\int_0^{g_m} G(X_t)dt\right)+\E_x\left(\int_{g_m}^{\infty} G(X_t)dt\right).
\end{align}
Note that from the fact that $G$ is finite and $g_m$ has finite expectation (see Lemma \ref{cor:moments})) the first term on the right-hand side of \eqref{eq:Disnonemptyauxliaryresult} is finite. Now we analyse the second term in the right-hand side of \eqref{eq:Disnonemptyauxliaryresult}. With $n \in \N$, since $G(X_t)$ is non-negative for all $t\geq g_m$ we have  

\begin{align*}
\E_x\left(\int_{g_m}^{\infty} G(X_t)dt\right)&=\E_x\left(\I_{\{ g_m<n\}}\int_{g_m}^{\infty} G(X_t)dt\right)+\E_x\left(\I_{\{ g_m\geq n\}}\int_{g_m}^{\infty} G(X_t)dt\right)\\
&\geq \E_x\left(\I_{\{ g_m<n\}}\int_{g_m}^{n} G(X_t)dt\right)\\
& \geq \frac{1}{2} \E_x\left(\I_{\{ g_m<n\}} (n-g_m) \right).
\end{align*}

Then letting $n\rightarrow \infty$ and using the monotone convergence theorem we deduce that $V(x)=\infty$ which leads to a contradiction. From the fact that $V$ is a non-decreasing function and the set $D\neq \emptyset$ we have that there exists a $\widetilde{x}$ sufficiently large such that $V(x)=0$ for all $x\geq \widetilde{x}$.

%
%
%
%
%

\end{proof}

\begin{lemma}
\label{lemma:continuityofV}
The function $V$ is continuous. 
\end{lemma}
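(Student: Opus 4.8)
The plan is to deduce continuity of $V$ from its monotonicity together with a dominated-convergence estimate for the increment $V(x_2)-V(x_1)$, where the relevant time horizon is controlled by the optimal stopping time produced in Lemma~\ref{lemma:Optimaltau} and the bound of Lemma~\ref{lemma:vzero}. Since $V\equiv 0$ on $[\widetilde{x},\infty)$, continuity there is immediate, so it suffices to prove continuity at each point $x\le\widetilde{x}$; by the monotonicity of $V$ it is then enough to show $V(x_2)-V(x_1)\to 0$ both as $x_2\downarrow x_1$ and as $x_1\uparrow x_2$, for $x_1\le x_2$ confined to a fixed bounded interval $[x_1^0,\widetilde{x}]$.

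First I would record that first passage times have finite mean: applying optional stopping to the martingale $X_t-\psi'(0+)t$ (integrable thanks to \eqref{eq:assumptionofPi}) at $\tau_c^+\wedge n$ and using that $X$ creeps upward (so $X_{\tau_c^+}=c$, see Remark~\ref{rem:lastzero}) gives $\E(\tau_c^+)=c/\psi'(0+)<\infty$ for every $c\ge 0$. Next, using the translation-invariant form $V(x)=\inf_{\tau\in\mathcal{T}}\E(\int_0^\tau G(X_s+x)\,ds)$ and letting $\tau_1=\inf\{t\ge 0:X_t+x_1\in D\}$, which is optimal for $V(x_1)$ by Lemma~\ref{lemma:Optimaltau}, I would compare $V(x_2)$ against the \emph{same} stopping time $\tau_1$ to obtain
\begin{align*}
0\le V(x_2)-V(x_1)\le \E\left(\int_0^{\tau_1}\big[G(X_s+x_2)-G(X_s+x_1)\big]\,ds\right),
\end{align*}
the integrand being nonnegative because $G$ is non-decreasing. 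Since $D\supseteq[\widetilde{x},\infty)$ we have $\tau_1\le\tau^+_{\widetilde{x}-x_1}\le\tau^+_c$ with $c:=\widetilde{x}-x_1^0\ge 0$, so that $2\,\I_{\{s\le\tau^+_c\}}$ is an integrable dominating function.

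I would then pass to the limit by dominated convergence, writing $G(X_s+x_2)-G(X_s+x_1)=2\psi'(0+)[W(X_s+x_2)-W(X_s+x_1)]$, which is bounded by $2$. As $x_2\downarrow x_1$ the integrand tends to $0$ at \emph{every} $(s,\omega)$ because $G$ is right-continuous everywhere (including at $0$), so right-continuity of $V$ follows at once. For the left limit $x_1\uparrow x_2$ (comparing with the optimal time for the lower endpoint and bounding the horizon by $\tau^+_c$), the integrand converges to $0$ except where $G$ fails to be left-continuous, i.e.\ on the level set $\{X_s=-x_2\}$; this set is $ds\times d\P$-null, since its expected occupation over $[0,\tau^+_c]$ equals the mass the killed potential measure $R^{(0)}(c,0,\cdot)$ assigns to the single point $-x_2$, which vanishes by the absolute continuity provided in \eqref{eq::qpotentialkillinglessa}. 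Dominated convergence then gives left-continuity, and combining both sides establishes continuity of $V$ on $\R$.

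The main obstacle is precisely the possible jump of $G$ (equivalently of $W$) at $0$ in the finite-variation case: it rules out a clean global Lipschitz estimate and forces the $ds\times d\P$-a.e.\ argument above rather than pointwise-everywhere convergence. The crucial inputs that make this work are that the process spends no Lebesgue time at a fixed level — extracted from the absolute continuity of the (killed) potential measure in \eqref{eq::qpotentialkillinglessa} — and the finite-mean first-passage bound, which supplies the integrable majorant needed to justify the interchange of limit and expectation.
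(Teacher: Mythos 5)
Your argument is correct and follows essentially the same route as the paper: both compare $V$ at nearby points using a common (optimal) stopping time, dominate the time horizon by a first passage time with finite mean, and invoke the density of the killed potential measure \eqref{eq::qpotentialkillinglessa} to neutralise the possible jump of $W$ at zero. The only real difference is in the finish: the paper runs a quantitative estimate (uniform continuity of $W$ on $[0,\widetilde{x}]$ plus the explicit occupation bound $\delta/\psi'(0+)^2$ for the small interval straddling the discontinuity), whereas you reach the same conclusion qualitatively by dominated convergence, using that the occupation measure of the single point where $W$ jumps is null.
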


\begin{proof}
From the previous lemma we know that there exists an $\widetilde{x}$ such that $V(x)=0$ for all $x\geq \widetilde{x}$. As $X$ is a spectrally negative L\'evy process drifting to infinity we have that $X_{\tau_{\widetilde{x}}^+}=\widetilde{x}$ $\P$-a.s. and thus

\begin{align*}
V(x)&=\inf_{\tau \in \mathcal{T}} \E_x\left( \int_0^{\tau} G(X_t)dt \right)\\
&=\inf_{\tau \in \mathcal{T}} \E_x\left( \I_{\{ \tau<\tau_{\widetilde{x}}^+\}}\int_0^{\tau} G(X_t)dt+\I_{\{ \tau\geq \tau_{\widetilde{x}}^+\}}\int_0^{\tau_{\widetilde{x}}^+}G(X_t)dt+\I_{\{ \tau\geq \tau_{\widetilde{x}}^+\}}\int_{\tau_{\widetilde{x}}^+}^{\tau} G(X_t)dt \right)\\
&=\inf_{\tau \in \mathcal{T}} \E_x\left( \int_0^{\tau \wedge \tau_{\widetilde{x}}^+}G(X_t)dt+\I_{\{ \tau\geq \tau_{\widetilde{x}}^+\}}\int_{0}^{\tau-\tau_{\widetilde{x}}^+} G(X_{t+\tau_{\widetilde{x}}^+})dt \right)\\
&=\inf_{\tau \in \mathcal{T}} \E_x\left( \E_x\left( \int_0^{\tau \wedge \tau_{\widetilde{x}}^+}G(X_t)dt+\I_{\{ \tau\geq \tau_{\widetilde{x}}^+\}}\int_{0}^{\tau-\tau_{\widetilde{x}}^+} G(X_{t+\tau_{\widetilde{x}}^+})dt \bigg| \F_{\tau_{\widetilde{x}}^+}\right) \right)
\end{align*}
Using the strong Markov property of $X$ and the fact that $V(\widetilde{x})=0$ we have 

\begin{align*}
V(x)
&=\inf_{\tau \in \mathcal{T}} \E_x\left(  \int_0^{\tau \wedge \tau_{\widetilde{x}}^+}G(X_t)dt+\I_{\{ \tau\geq \tau_{\widetilde{x}}^+\}} \E_{X_{\tau_{\widetilde{x}}^+}}\left(\int_{0}^{\tau} G(X_{t+\tau_{\widetilde{x}}^+})dt \right) \right)\\
&=\inf_{\tau \in \mathcal{T}} \E_x\left(  \int_0^{\tau \wedge \tau_{\widetilde{x}}^+}G(X_t)dt+\I_{\{ \tau\geq \tau_{\widetilde{x}}^+\}} V(\widetilde{x}) \right)\\
&=\inf_{\tau \in \mathcal{T}} \E_x\left(  \int_0^{\tau \wedge \tau_{\widetilde{x}}^+}G(X_t)dt \right).
\end{align*} 
Note that the process $\left\{\int_0^t G(X_s)ds,t\geq 0\right\}$ is continuous. Then we have that 
\begin{align*}
\E_x\left( \sup_{t\geq 0} \left| \int_0^{t\wedge \tau_{\widetilde{x}}^+} G(X_s)ds\right| \right) &\leq \E_x\left( \sup_{t\geq 0}  \int_0^{t\wedge \tau_{\widetilde{x}}^+} |G(X_s)|ds \right)\\
&\leq \E_x\left(\sup_{t\geq 0} t\wedge \tau_{\widetilde{x}}^+\right)\\
&=\E_x(\tau_{\widetilde{x}}^+)\\
&<\infty,
\end{align*}
where the last quantity is finite since we know that for a spectrally negative L\'evy process
\begin{align*}
\E(e^{-q\tau_x^+})=e^{-\Phi(q)x}
\end{align*}
and then calculating derivatives with respect to $q$ and evaluating at zero (see \cite{feller1971an} (section XIII.2)) we obtain that 

\begin{align*}
 \E(\tau_x^+)=\frac{x}{\psi'(0+)}<\infty.
\end{align*}

Then from the general theory of optimal stopping time we have that the infimum in 
\begin{align}
\label{eq:optimawedge}
V(x)=\inf_{\tau \in \mathcal{T}} \E_x\left(  \int_0^{\tau \wedge \tau_{\widetilde{x}}^+}G(X_t)dt \right)
\end{align}
is attained, say $\tau_{x}^*=\inf\{t\geq 0:X_t+x \in D \}$. Note that from the definition of $\tau_x^*$ and $\tau_{\widetilde{x}}^+$ we have that $\tau_{x}^* \leq \tau_{\widetilde{x}}^+$. Now we check the continuity of $V$. As $W$ is continuous in $[0,\infty)$ we have that $W$ is uniformly continuous in the interval $[0,\widetilde{x}]$. Now take $\varepsilon>0$, then there exists some $\delta>0$ such that for all $x,y \in [0,\widetilde{x}]$ it holds $|W(x)-W(y)|<\varepsilon$ when $|x-y|<\delta$. Then we have

\begin{align*}
V(x+\delta)-V(x)& \leq \E\left( \int_0^{\tau^*_{x}}G(X_t+x+\delta)dt \right)-\E\left( \int_0^{\tau^*_{x}}G(X_t+x)dt \right)\\
&=2 \psi'(0) \E\left(  \int_0^{\tau^*_{x}} [W(X_t+x+\delta)- W(X_t+x)]dt\right)\\
& \leq 2 \psi'(0) \E\left(  \int_0^{\tau_{\widetilde{x}}^+ } [W(X_t+x+\delta)- W(X_t+x)]dt\right),
\end{align*}
where the first inequality holds since $\tau_{x}^*$ is not necessarily optimal for $V(x+\delta)$ and the last inequality follows since $W(X_t+x+\delta)- W(X_t+x)$ is always positive and from $\tau_x^*\leq \tau_{\widetilde{x}}^+$.

Recall that we have a possible discontinuity for $W$ in zero, and $W(x)=0$ for $x< 0$. Thus

\begin{align}
\label{eq:Vcontinuity}
V(x+\delta)-V(x) &\leq 2 \psi'(0) \E\left(  \int_0^{\tau_{\widetilde{x}}^+ } [W(X_t+x+\delta)- W(X_t+x)]dt\right)\nonumber\\
&=2 \psi'(0) \E\left(  \int_0^{\tau_{\widetilde{x}}^+ } \I_{\{ X_t+x+\delta<0 \}}[W(X_t+x+\delta)- W(X_t+x)]dt\right.\nonumber\\
& \qquad + \int_0^{\tau_{\widetilde{x}}^+ } \I_{\{ X_t+x+\delta\geq 0,X_t+x< 0 \}}[W(X_t+x+\delta)- W(X_t+x)]dt\nonumber\\
& \qquad + \left. \int_0^{\tau_{\widetilde{x}}^+ } \I_{\{ X_t+x\geq 0 \}} [W(X_t+x+\delta)- W(X_t+x)]dt\right).
\end{align}
Note that the first term in \eqref{eq:Vcontinuity} is zero. Now we analyse the second term. Using the monotonicity of $W$, Fubini's Theorem and the density of the potential measure of $X$ killed on exiting $(-\infty,\tilde{x})$ given in  \eqref{eq::qpotentialkillinglessa} we have
\begin{align*}
\E&\left( \int_0^{\tau_{\widetilde{x}}^+ } \I_{\{ X_t+x+\delta\geq 0,X_t+x< 0 \}}[W(X_t+x+\delta)- W(X_t+x)]dt\right)\\
&\leq W(\widetilde{x}+x+\delta)\E\left( \int_0^{\tau_{\widetilde{x}}^+ } \I_{\{ X_t+x+\delta\geq 0,X_t+x< 0 \}}dt\right)\\
&= W(\widetilde{x}+x+\delta) \int_0^{\infty } \P_x(-\delta\leq X_t<  0 ,\tau_{\widetilde{x}}^+>t)dt\\
&= W(\widetilde{x}+x+\delta) \int_{-\delta}^0 [W(\widetilde{x}-y) -W(x-y)]dy\\
&\leq \frac{ \delta}{\psi'(0+)^2},
\end{align*}
where the final inequality follows since $W$ is strictly increasing and $\lim_{x\rightarrow \infty}W(x)=1/\psi'(0+)$. Finally we inspect the third term in \eqref{eq:Vcontinuity}, using the finiteness of the moment of $\tau_{\widetilde{x}}^+$ we obtain

\begin{align*}
\E& \left( \int_0^{\tau_{\widetilde{x}}^+ } \I_{\{ X_t+x\geq 0 \}} [W(X_t+x+\delta)- W(X_t+x)]dt\right)<  \varepsilon \E \left( \int_0^{\tau_{\widetilde{x}}^+ } \I_{\{ X_t+x\geq 0 \}} dt\right) \leq \varepsilon \E(\tau_{\widetilde{x}}^+)<\infty.
\end{align*}

Hence

\begin{align*}
V(x+\delta)-V(x)<\frac{ \delta}{\psi'(0+)^2}+\varepsilon \E(\tau_{\widetilde{x}}^+)
\end{align*}

and the continuity holds.
\end{proof}

From Lemmas \ref{lemma:vzero} and \ref{lemma:continuityofV} we have that the set $D=\{ x:V(x)=0\}=[a,\infty)$ for some $a\in \R_+$. From Lemma \ref{lemma:Optimaltau} we know that for some $a \in \R_+$
\begin{align*}
\tau_D=\inf\{ t>0:X_t \in [a,\infty]=\{ t>0:X_t \geq a\}
\end{align*}
attains the infimum in $V(x)$. As $X$ is a spectrally negative L\'evy process we have that  $\tau_D=\tau_a^+$ $\P$-a.s. and hence $\tau_a^+$ is an optimal stopping time for \eqref{eq:optimalstoppingforallx} for some $a\in \R_+$. Then we just have to find the value of $a$ which minimises the right hand side of \eqref{eq:optimalstoppingforallx} with $\tau=\tau_a^+$. So in what follows we will analyse the function  
\begin{align}
\label{eq:Vcandidate}
V_a(x)=\E_x\left(\int_0^{\tau_a^+} G(X_t) dt\right).
\end{align}
and find the value $a^*$ which minimises the function $a \mapsto V_a(x)$ for a fixed $x\in \R$. We could then conclude that  $V_{a^*}(x)=V(x)$ and $\tau_{a^*}^+$ is an optimal stopping time.

Using the Wiener--Hopf factorisation we find an a explicit form of $V_a$ in terms of the convolution of the function $F$ with itself.

\begin{lemma}
\label{lemma:formofV}
For $x\geq  a$, $V_a(x)=0$ and for $x< a$,

\begin{align}
\label{eq:Vcandidateexpression1}
V_a(x)
&=\frac{2}{\psi'(0+)}\int_{x}^{a} \int_{[0,y]} F(y-z) F(dz)dy-\frac{a-x}{\psi'(0+)}.
\end{align}

\end{lemma}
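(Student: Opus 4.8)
The plan is to represent $V_a$ through the occupation ($0$-potential) measure of $X$ killed on first passage above $a$, and then evaluate the resulting deterministic integral using the explicit scale-function density \eqref{eq::qpotentialkillinglessa} together with a convolution identity for $W$. First I would dispose of the case $x\geq a$. For $x>a$ we have $X_0=x>a$, so $\tau_a^+=0$; for $x=a$ the upward regularity of a spectrally negative L\'evy process (it creeps upwards) again gives $\tau_a^+=0$, $\P_x$-a.s. Hence $V_a(x)=0$ there. For $x<a$, since $G=2F-1$ takes values in $[-1,1]$ and $\E_x(\tau_a^+)=(a-x)/\psi'(0+)<\infty$ (by spatial homogeneity and the identity $\E(\tau_y^+)=y/\psi'(0+)$ recorded earlier), Tonelli's theorem justifies
\[
V_a(x)=\int_0^{\infty}\E_x\!\left[G(X_t)\I_{\{\tau_a^+>t\}}\right]dt=\int_{(-\infty,a)}G(y)\,R^{(0)}(a,x,dy),
\]
where $R^{(0)}$ is the $0$-potential measure of $X$ killed on exiting $(-\infty,a)$. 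By \eqref{eq::qpotentialkillinglessa} and $\Phi(0)=0$, this measure has density $r^{(0)}(a,x,y)=W(a-y)-W(x-y)$.

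Next I would substitute $G(y)=2\psi'(0+)W(y)-1$ and split the integral. Since $W$ vanishes on $(-\infty,0)$, the $W$-part is supported on $[0,a)$, giving
\[
V_a(x)=2\psi'(0+)\int_0^{a}W(y)\bigl[W(a-y)-W(x-y)\bigr]dy-\int_{(-\infty,a)}\bigl[W(a-y)-W(x-y)\bigr]dy.
\]
The second integral is exactly $\int_{(-\infty,a)}r^{(0)}(a,x,y)\,dy=\E_x(\tau_a^+)=(a-x)/\psi'(0+)$, which produces the term $-(a-x)/\psi'(0+)$ in the claim (so the individually divergent pieces of this integral cancel automatically).

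It then remains to identify the first integral with $\frac{2}{\psi'(0+)}\int_x^a H(y)\,dy$. Writing $\Phi_2(t)=\int_0^t W(y)W(t-y)\,dy$ for $t\geq 0$ and $\Phi_2\equiv 0$ on $(-\infty,0]$, the fact that $W(x-y)=0$ for $y>x$ shows the first integral equals $2\psi'(0+)\bigl[\Phi_2(a)-\Phi_2(x)\bigr]$. Differentiating the convolution and comparing with \eqref{eq:alternativeexpresionH} gives
\[
\psi'(0+)^2\,\Phi_2'(t)=\psi'(0+)^2\Bigl(W(t)W(0)+\int_0^t W(y)W'(t-y)\,dy\Bigr)=H(t),
\]
where the boundary term $W(t)W(0)$ accounts for the possible jump of $W$ at $0$ when $X$ is of finite variation. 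Integrating over $[x,a]$ identifies the first integral with $\frac{2}{\psi'(0+)}\int_x^a H(y)\,dy$, and since $H(y)=\int_{[0,y]}F(y-z)\,F(dz)$, assembling the two contributions yields \eqref{eq:Vcandidateexpression1}.

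The two substantive points are the Tonelli/occupation-measure representation with its explicit density, and the convolution-derivative identity $\psi'(0+)^2\Phi_2'=H$, in which the atom of $W$ at $0$ must be handled with care in the finite-variation case; I expect this second identity to be the main obstacle, while the cancellation of the divergent pieces and the remaining manipulations are routine bookkeeping.
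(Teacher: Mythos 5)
Your proposal is correct, and it reaches \eqref{eq:Vcandidateexpression1} by a genuinely different route. The paper does not use the killed potential density at all: it applies Fubini to get $\int_0^\infty\E_x(G(X_t)\I_{\{\overline{X}_t<a\}})dt$, inserts an independent exponential time $e_q$, invokes the Wiener--Hopf factorisation (independence of $\overline{X}_{e_q}$ and $\overline{X}_{e_q}-X_{e_q}$, with $\overline{X}_{e_q}\sim\mathrm{Exp}(\Phi(q))$ and the law of $-\underline{X}_{e_q}$ from \eqref{eq:densityofrunninginfimum}), and then lets $q\downarrow 0$ by dominated convergence to land directly on $\int_0^{a-x}\int_{[0,\infty)}G(x+y-z)\,W(dz)\,dy$, which is already the Stieltjes convolution appearing in the statement. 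You instead start from the $0$-resolvent density $r^{(0)}(a,x,y)=W(a-y)-W(x-y)$ of \eqref{eq::qpotentialkillinglessa}, which shortcuts the entire limiting argument (the constant part of $G$ integrates to $\E_x(\tau_a^+)=(a-x)/\psi'(0+)$ immediately), at the cost of one extra identity: converting the Lebesgue convolution $\Phi_2(t)=\int_0^tW(y)W(t-y)dy$ into the Stieltjes convolution $H$ via $\psi'(0+)^2\Phi_2'=H$. That identity is correct --- it is precisely the content of \eqref{eq:alternativeexpresionH} combined with the Leibniz boundary term $W(0)W(t)$, and it can be confirmed on Laplace transforms since both sides transform to $\psi'(0+)^2\beta/\psi(\beta)^2$ --- and your resulting expression $2\psi'(0+)[\Phi_2(a)-\Phi_2(x)]-(a-x)/\psi'(0+)$ is exactly the form the paper records in \eqref{eq:alternativeexpresionV}. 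The one place to be slightly more careful than you were is the justification of $\Phi_2'$ at the endpoint behaviour when $W$ has an atom-like jump at $0$ (finite variation case): the clean statement is that $\Phi_2$ is $C^1$ on $(0,\infty)$ with the stated derivative because $W\in C^1(0,\infty)$ under the paper's standing assumption on $\Pi$, and $\Phi_2(0)=0$ by continuity. With that noted, your argument is complete and arguably more economical than the paper's, though it leans on the quoted resolvent formula where the paper only uses the Wiener--Hopf ingredients.
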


\begin{proof}

It is clear that $V_a(x)=0$ for $x\geq a$, since if the process begins above the level $a$, then the first passage time above $a$ is zero and the integral inside of the expectation in \eqref{eq:Vcandidate} is again zero. Now suppose that $x<a$, then using Fubini's Theorem twice,

\begin{align*}
V_a(x)&=\E_x\left( \int_0^{\tau_a^+}G(X_t)dt\right)\\
&=\E_x\left( \int_0^{\infty} G(X_t)\I_{\{\tau_a^+>t\}}dt\right)\\
&= \int_0^{\infty}\E_x( G(X_t)\I_{\{\tau_a^+>t\}})dt\\
&= \int_0^{\infty}\E_x( G(X_t)\I_{\{ \overline{X}_t < a  \}})dt
\end{align*}
where $\overline{X}_t =\sup_{0\leq s \leq t} X_s$ is the running supremum. Denote by $e_q$ an exponential distribution with parameter $q>0$ independent of $X$, then using the dominated convergence theorem we obtain
\begin{align*}
V_a(x)&=\int_0^{\infty}\E_x( G(X_t)\I_{\{ \overline{X}_t < a  \}})dt\\
&=\lim_{q\downarrow 0}  \int_0^{\infty} e^{-qt} \E_x( G(X_t)\I_{\{ \overline{X}_t < a  \}})dt\\
&=\lim_{q\downarrow 0} \frac{1}{q} \E( G(X_{e_q}+x)\I_{\{ \overline{X}_{e_q} < a-x  \}})\\
&=\lim_{q\downarrow 0} \frac{1}{q} \E( G(-(\overline{X}_{e_q}-X_{e_q})+\overline{X}_{e_q}+x)\I_{\{ \overline{X}_{e_q} < a -x \}}).
\end{align*}
From the Wiener--Hopf factorisation (see for example \cite{kyprianou2013fluctuations} Theorem 6.15) we know that $\overline{X}_{e_q}$ is independent of $\overline{X}_{e_q}-X_{e_q}$ and that $\overline{X}_{e_q}$ is exponentially distributed with parameter $\Phi(q)$. Thus we have

\begin{align*}
V_a(x)&=\lim_{q\downarrow 0} \frac{1}{q} \E( G(-(\overline{X}_{e_q}-X_{e_q})+\overline{X}_{e_q}+x)\I_{\{ \overline{X}_{e_q} < a -x \}})\\
&=\lim_{q\downarrow 0} \frac{1}{q} \int_{0}^{a-x} \E( G(-(\overline{X}_{e_q}-X_{e_q})+x+y))\P(\overline{X}_{e_q} \in dy)\\
&=\lim_{q\downarrow 0} \frac{\Phi(q)}{q} \int_{0}^{a-x} \E_x( G(-(-\underline{X}_{e_q})+x+y))e^{-\Phi(q)y}dy\\
&=\lim_{q\downarrow 0} \frac{\Phi(q)}{q} \int_{0}^{a-x} \int_{[0,\infty)} G(x+y-z) \P(-\underline{X}_{e_q} \in dz)e^{-\Phi(q)y}dy,
\end{align*}
where the last equality follows since $\overline{X}_{t}-X_t\stackrel{d}{=} -\underline{X}_t$ for all $t\geq 0$. From the expression of the density of $-\underline{X}_{e_q}$ given in equation \eqref{eq:densityofrunninginfimum} we deduce that 

\begin{align*}
V_a(x)&=\lim_{q\downarrow 0} \frac{\Phi(q)}{q} \int_{0}^{a-x} \int_{[0,\infty)} G(x+y-z) \left(\frac{q}{\Phi(q)}W^{(q)}(dz)-qW^{(q)}(x)dz \right)e^{-\Phi(q)y}dy\\
&=\int_{0}^{a-x} \int_{[0,\infty)} G(x+y-z) W(dz)dy\\
\end{align*}
where the last equality follows by the dominated convergence theorem, $q\mapsto W^{(q)}$ is an analytic function and $\lim_{q\rightarrow 0}\Phi(q)=0$. Now recall that $G(x)=2\psi'(0+)W(x)-1=2F(x)-1$ and that $W(x)=0$ for all $x<0$

\begin{align*}
V_a(x)&=\int_{0}^{a-x} \int_{[0,x+y]} 2F(x+y-z) W(dz)dy-(W(\infty)-W(0-))(a-x)\\
&=\frac{2}{\psi'(0+)}\int_{0}^{a-x} \int_{[0,x+y]} F(x+y-z) F(dz)dy-\frac{a-x}{\psi'(0+)}\\
&=\frac{2}{\psi'(0+)}\int_{x}^{a} \int_{[0,y]} F(y-z) F(dz)dy-\frac{a-x}{\psi'(0+)}\\
\end{align*}
and the proof is complete.
\end{proof}

Now we characterise the value at which the function $a \mapsto V_a(x)$ achieves its minimum value. Recall that $x_0$ is smallest value in which the function $G$ is positive, i.e.

\begin{align*}
x_0=\inf\{x\in \R: G(x)\geq 0 \}
\end{align*}

\begin{lemma}
\label{lemma:aoptimal}
For all $x\in \R$ the function $a\mapsto V_a(x)$ achieves its minimum value in $a^* \geq x_0$ which does not depend on the value of $x$. The value $a^*$ is characterised as in Theorem \ref{thm:maintheorem}.

\end{lemma}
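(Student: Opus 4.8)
The plan is to start from the explicit formula for $V_a$ in Lemma \ref{lemma:formofV} and simply recognise the inner integral as the self-convolution $H$. Since $H(y)=\int_{[0,y]}F(y-z)\,dF(z)$, the expression \eqref{eq:Vcandidateexpression1} becomes, for $x<a$,
\begin{align*}
V_a(x)=\frac{1}{\psi'(0+)}\int_x^a\bigl(2H(y)-1\bigr)\,dy,
\end{align*}
while $V_a(x)=0$ for $x\geq a$. This reduces the whole question to studying the elementary one-dimensional function $a\mapsto V_a(x)$, and it immediately makes the convolution $H$ (rather than $F$) the relevant object.

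Next I would fix $x$ and minimise over $a$ by a sign analysis. For $a>x$ the map $a\mapsto V_a(x)$ is differentiable with
\begin{align*}
\frac{\partial}{\partial a}V_a(x)=\frac{1}{\psi'(0+)}\bigl(2H(a)-1\bigr).
\end{align*}
Because $H$ is a distribution function it is non-decreasing from $0$ to $1$, so $2H(a)-1<0$ for $a<a^*$ and $2H(a)-1\geq 0$ for $a\geq a^*$, where $a^*:=\inf\{a\in\R:H(a)\geq 1/2\}$. Hence $a\mapsto V_a(x)$ first decreases and then increases, and its global minimum is attained at $a^*$; crucially the threshold $a^*$ does not involve $x$. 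I would treat the two positions of $x$ separately: if $x<a^*$ the minimiser is $a^*$ with $V_{a^*}(x)<0$, whereas if $x\geq a^*$ then $H\geq 1/2$ on $[x,\infty)$, so $V_a(x)\geq 0=V_{a^*}(x)$ and $a^*$ is again optimal. In both cases $a^*$ minimises $a\mapsto V_a(x)$ for every $x$, as claimed.

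It then remains to match $a^*$ with the characterisation in Theorem \ref{thm:maintheorem}. For the inequality $a^*\geq x_0$ I would use stochastic dominance: since $H$ is the law of $-\underline{X}_\infty$ plus an independent non-negative copy, $H(y)\leq F(y)$ for all $y$, whence $a^*=\inf\{y:H(y)\geq 1/2\}\geq\inf\{y:F(y)\geq 1/2\}=x_0$, the last equality because $G=2F-1$. Finally I would split according to the atom of $F$ at the origin: evaluating the convolution at $0$ gives $H(0)=F(0)^2$, which vanishes in the infinite-variation case. If $F(0)^2<1/2$ then $a^*>0$ and, using the continuity and strict monotonicity of $H$ inherited from the scale function $W$, $a^*$ is the unique solution of $H(a^*)=1/2$ (case $i$); if $F(0)^2\geq 1/2$, which forces finite variation, then $H(0)\geq 1/2$ gives $a^*=0$ (case $ii$).

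The main obstacle I anticipate is not the optimisation itself, which is a clean first-derivative argument, but the careful bookkeeping at the kink $a=x$ and at the boundary $a^*=0$, together with verifying the regularity of $H$: continuity so that $H(a^*)=1/2$ holds with equality, and strict monotonicity so that the median is unique. These properties have to be read off from the corresponding properties of $W$, which is continuous and strictly increasing on $[0,\infty)$ with at most a jump at the origin, via the alternative expression \eqref{eq:alternativeexpresionH} for $H$.
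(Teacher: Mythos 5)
Your proposal is correct and follows essentially the same route as the paper: both differentiate $a\mapsto V_a(x)$ to obtain $\frac{1}{\psi'(0+)}(2H(a)-1)$, perform a sign analysis around the median of $H$, split into the cases $F(0)^2<1/2$ and $F(0)^2\geq 1/2$ via $H(0)=F(0)^2$, and deduce $a^*\geq x_0$ from $H(a^*)\leq F(a^*)$ (which you phrase as stochastic dominance and the paper obtains by bounding the convolution integral by $F(a^*)$). The only cosmetic difference is that the paper establishes the existence of the zero of the derivative by examining $f(0+)$ and $\lim_{a\to\infty}f(a)$ rather than defining $a^*$ directly as $\inf\{a:H(a)\geq 1/2\}$, but the content is identical.
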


\begin{proof}
We know that the function $V_a(x)$ is given for $x\leq a$
\begin{align*}
V_a(x)=\frac{2}{\psi'(0+)} \int_x^a \int_{[0,y]} F(y-z)F(dz)dy-\frac{a-x}{\psi'(0+)}
\end{align*}
then $a\mapsto V_a(x)$ is differentiable and for $x$ sufficiently small
\begin{align*}
f(a):=\frac{\partial }{\partial a} V_a(x)=\frac{2}{\psi'(0+)}  \int_{[0,a]} F(a-z)F(dz)-\frac{1}{\psi'(0+)}.
\end{align*}
Recall that $F$ is a continuous function in $[0,\infty)$ and $F(x)=0$ for all $x<0$ then we can write 

\begin{align*}
f(a)=\frac{2}{\psi'(0+)}  \int_{[0,\infty)} F(a-z)F(dz)-\frac{1}{\psi'(0+)}
\end{align*} 
%
Due to monotone convergence we see that $f$ is a continuous function in $(0,\infty)$ and 
\begin{align*}
\lim_{a \rightarrow \infty}f(a)&=\frac{2}{\psi'(0+)} \int_{[0,\infty)} F(dz)-\frac{1}{\psi'(0+)}\\
&=\frac{1}{\psi'(0+)}\\
&>0
\end{align*}
where the last equality follows since $F$ is a distribution function. Moreover we have that $f(a)=-1/\psi'(0+)<0$ for $a<0$ and

\begin{align*}
f(0+)=\lim_{a \downarrow 0} f(a)=\frac{2}{\psi'(0+)}F(0)^2-\frac{1}{\psi'(0+)}.
\end{align*}
Hence in the case that $X$ is of infinite variation we have that $F(0)=0$ and thus $f(0+)<0$. Meanwhile in the case that $X$ is of finite variation $f(0+)<0$ if and only if $F(0)^2<1/2$. Hence, if $X$ is of infinite variation or $X$ is of finite variation with $F(0)^2<1/2$ there exists a value $a^*\geq 0$ such that $f(a^*)=0$ and this occurs if and only if

\begin{align*}
 \int_{[0,a^*]} F(a^*-z)F(dz)=\frac{1}{2}.
\end{align*}
In the case that that $X$ is of finite variation with $F(0)^2\geq 1/2$ we have that $f(0+)\geq 0$ and then we define $a^*=0$. Therefore we have the following: there exists a value $a^*\geq 0$ such that for $x<a^*$, $f(x)<0$ and for $x>a^*$ it holds that $f(x)>0$. This implies that the behaviour of $a \mapsto V_a(x)$ is as follows: for $a < a^*$, $a \mapsto V_a(x)$ is a decreasing function, and for $a>a^*$, $a\mapsto V_a(x)$ is increasing. Consequently $a \mapsto V_a(x)$ reaches its minimum value uniquely at $a=a^*$. That is, for all $a \in \R$

\begin{align*}
V_a(x)\geq V_{a^*}(x) \qquad \text{for all } x\in \R.
\end{align*}
It only remains to prove that $a^*\geq x_0$. Recall that the definition of $x_0$ is 

\begin{align*}
x_0=\inf\{x\in \R :G(x)\geq  0\}=\inf\{x\in \R: F(x) \geq 1/2 \}.
\end{align*}
We know from the definition of $a^*$ that $f(a^*)\geq 0$ which implies that 

\begin{align*}
1/2 \leq \int_{[0,a*]} F(a^*-z) F(dz) \leq \int_{[0,a*]}  F(dz)=F(a^*)
\end{align*}
where in the last inequality we use that $F(x)\leq 1$. Therefore we have that $a^*\geq x_0$. 
\end{proof}

We conclude that for all $x\in \R$,

\begin{align*}
V(x)=\E_x\left( \int_0^{\tau_{a^*}^+} G(X_t)dt\right),
\end{align*}
where $a^*$ is characterised in Theorem \ref{thm:maintheorem}. All that remains is to show now is the necessary and sufficient conditions for smooth fit to hold.
\begin{lemma}
We have the following:

\begin{itemize}
\item[$i)$] If $X$ is of infinite variation or finite variation with \eqref{eq:rhocondition} then there is smooth fit at $a^*$ i.e. $V'(a^*-)=0$.

\item[$ii)$] If $X$ is of finite variation and \eqref{eq:rhocondition} does not hold then there is continuous fit at $a^*=0$ i.e. $V(0-)=0$. There is no smooth fit at $a^*$ i.e. $V'(a^*-)>0$.
\end{itemize}
\end{lemma}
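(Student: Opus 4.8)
The plan is to argue directly from the closed form of the value function, so that the whole statement reduces to a differentiation. By Lemma \ref{lemma:aoptimal} we have $V=V_{a^*}$, hence by Lemma \ref{lemma:formofV}, rewritten with $H(y)=\int_{[0,y]}F(y-z)F(dz)$ as in \eqref{eq:valuefunctionintermsofscale},
\begin{align*}
V(x)&=\frac{2}{\psi'(0+)}\int_{x}^{a^*}H(y)\,dy-\frac{a^*-x}{\psi'(0+)}, \quad x<a^*,\\
V(x)&=0, \quad x\geq a^*.
\end{align*}
Since $V\equiv 0$ on $[a^*,\infty)$ we immediately obtain $V'(a^*+)=0$ in both cases, so the entire question collapses to computing the left derivative $V'(a^*-)$. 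Differentiating the first line by the fundamental theorem of calculus gives
\begin{align*}
V'(x)=\frac{1}{\psi'(0+)}\big(1-2H(x)\big),
\end{align*}
an identity valid at every point where $H$ is continuous.

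For part $i)$, where $a^*>0$, the key input is the characterisation \eqref{eq:aoptimal}, namely $H(a^*)=1/2$. Since $F$ is continuous on $(0,\infty)$, the convolution $H$ is continuous at $a^*>0$, and letting $x\uparrow a^*$ in the derivative formula yields $V'(a^*-)=\frac{1}{\psi'(0+)}(1-2H(a^*))=0$. Combined with $V'(a^*+)=0$ this is exactly smooth fit. For part $ii)$, where $X$ is of finite variation and \eqref{eq:rhocondition} fails, Lemma \ref{lemma:aoptimal} gives $a^*=0$. For $x<0$ the integral $\int_{[0,y]}F(y-z)F(dz)$ runs over an empty set for each $y\in(x,0)$, so $H\equiv 0$ there and the formula collapses to the affine form $V(x)=x/\psi'(0+)$. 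Hence $V(0-)=0=V(0)$, which is continuous fit, whereas $V'(0-)=1/\psi'(0+)>0=V'(0+)$, so smooth fit cannot hold.

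The only delicate point, and the main thing to get right, is the continuity of $H$ underpinning both the derivative identity and the limit $x\uparrow a^*$, together with the careful treatment of the possible atom of $F$ at the origin. In the infinite variation case $F(0)=0$, so $F$, and therefore $H$, is continuous everywhere and nothing extra is needed. In the finite variation case $F$ carries an atom at $0$ of mass $F(0)=\psi'(0+)/d$, which makes $H$ jump at $0$ (by mass $F(0)^2$) while remaining continuous on $(0,\infty)$; since part $i)$ only ever uses continuity of $H$ at $a^*>0$, this is enough, and part $ii)$ avoids the issue altogether by exploiting the explicit affine expression for $V$ below zero. I expect this bookkeeping of the atom at the origin to be the only subtlety, the rest being a direct computation.
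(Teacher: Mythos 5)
Your proposal is correct and follows essentially the same route as the paper: substitute the closed form of $V=V_{a^*}$ from Lemma \ref{lemma:formofV}, differentiate to get $V'(x)=\frac{1}{\psi'(0+)}(1-2H(x))$ for $x<a^*$, and conclude via $H(a^*)=1/2$ in case $i)$ and via the affine expression $V(x)=x/\psi'(0+)$ below $a^*=0$ in case $ii)$. Your extra bookkeeping of the atom of $dF$ at the origin and the continuity of $H$ on $(0,\infty)$ is a sound (and slightly more careful) justification of the limit $x\uparrow a^*$ that the paper takes for granted.
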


\begin{proof}
From Lemma \ref{lemma:formofV} we know that $V(x)=0$ for $x\geq a^*$ and for $x\leq a^*$ 

\begin{align*}
V(x)=\frac{2}{\psi'(0+)}\int_{x}^{a^*} \int_{[0,y]} F(y-z) F(dz)dy-\frac{a^*-x}{\psi'(0+)}.
\end{align*}
Note that when $X$ is of finite variation with 

\begin{align*}
F(0)^2 \geq 1/2
\end{align*}
 we have $a^*=0$ and hence

\begin{align*}
V(x)=\frac{x}{\psi'(0+)}\I_{\{x\leq  0 \}},
\end{align*}
so $V(0-)=0=V(0+)$. The left and right derivative of $V$ at $0$ are given by

\begin{align*}
V'(0-)=\frac{1}{\psi'(0+)} \qquad  \text{and} \qquad V'(0+)=0.
\end{align*}
Therefore in this case only the continuous fit at $0$ is satisfied. If $X$ is of infinite variation or finite variation with 
\begin{align*}
F(0)^2 < 1/2
\end{align*}
we have from Lemma \ref{lemma:formofV} that $a^* > 0$. Its derivative for $x\leq a^*$ is

\begin{align*}
V'(x)=-\frac{2}{\psi'(0+)}\int_{[0,x]} F(x-z) F(dz)+\frac{1}{\psi'(0+)}.
\end{align*}
Since $a^*$ satisfies $\int_{[0,x]} F(x-z) F(dz)=1/2$ we have that 
\begin{align*}
V'(a^*-)=0=V'(a^*+)
\end{align*}
Thus we have smooth fit at $a^*$.

\end{proof}

\begin{rem}
The main result can also be deduced using a classical verification-type argument. Indeed, it is straightforward to show 
that if $\tau^*\geq 0$ is a candidate optimal strategy for the optimal stopping problem \eqref{eq:optimalstoppingforallx} and $V^*(x)=\E_x\left(\int_0^{\tau^*} G(X_t)dt \right)$, $x\in \R$, then the pair $(V^*,\tau^*)$ is a solution if
\begin{enumerate}
\item $V^*(x) \leq 0$ for all $x\in \R$,
\item the process $\displaystyle{\left\{V^*(X_t)+\int_0^t G(X_s)ds,t\geq 0 \right\}}$ is a $\P_x$-submartingale for all $x\in \R$. 
\end{enumerate}

With $\tau^*=\tau_{a^*}^+$ it can be shown that the first condition is satisfied. The submartingale property can also be shown to hold using It\^o's formula. However, the proof of this turns out to be rather more involved than the direct approach, as it requires some technical lemmas to derive the necessary smoothness of the value function, as well as the required inequality linked to the submartingale propery.

\end{rem}

\section{Examples}\label{sec_examples}
We calculate numerically (using the statistical software \cite{softwareR}) the value function $x \mapsto V_a(x)$ for some values of $a\in \R$. The models used were Brownian motion with drift, Cram\'er--Lundberg risk process with exponential claims and a spectrally negative L\'evy process with no Gaussian component and L\'evy measure given by $\Pi(dy)=e^{\beta y}(e^y-1)^{-(\beta +1)}dy, y<0$.

\subsection{Brownian motion with drift}
Let $X=\{ X_t,t\geq 0\}$ be a Brownian motion with drift, i.e., $X_t$ is of the form

\begin{align*}
X_t=\sigma B_t+\mu t,\qquad t\geq 0.
\end{align*}
where $\mu>0$. Since there is absence of positive jumps and we have a positive drift, $X$ is indeed a spectrally negative L\'evy process drifting to infinity. In this case the expressions for the Laplace exponent and scale functions are well known (see for example \cite{kyprianou2011theory}, Example 1.3). The Laplace exponent is given by

\begin{align*}
\psi(\beta)=\frac{\sigma^2}{2}\beta^2+ \mu \beta , \qquad \beta\geq 0.
\end{align*}
For $q\geq 0$ the scale function $W^{(q)}$ is 

\begin{align*}
W^{(q)}(x)=\frac{1}{\sqrt{\mu^2+2q\sigma^2}}\left(\exp\left((\sqrt{\mu^2+2q\sigma^2}-\mu)\frac{x}{\sigma^2}\right) -\exp\left(-(\sqrt{\mu^2+2q\sigma^2}+\mu)\frac{x}{\sigma^2}-\right) \right), \qquad x\geq 0.
\end{align*}
Letting $q=0$ and using that $F(x)=\psi'(0+)W(x)=\mu W(x)$ we get that
\begin{align*}
F(x)=1-\exp(-2\mu/\sigma^2 x), \qquad x\geq 0.
\end{align*}
That is, $-\underline{X}_{\infty} \sim \text{Exp}(2\mu /\sigma^2)$, which implies that $H$ corresponds to the distribution function of a $\text{Gamma}(2,2 \mu/\sigma^2)$ (see Remark \ref{rem:main:thm} $i)$). Therefore $a^*$ corresponds to the median of the aforementioned Gamma distribution. In other words, $H$ is given by
\begin{align*}
H(x)=1-\frac{2 \mu }{\sigma^2}x \exp\left(-\frac{2 \mu }{\sigma^2} x\right)- \exp\left(-\frac{2 \mu }{\sigma^2} x\right), \qquad x\geq 0
\end{align*} 
and then $a^*$ is the solution to 

\begin{align*}
1-\frac{2 \mu }{\sigma^2}x \exp\left(-\frac{2 \mu }{\sigma^2} x\right)- \exp\left(-\frac{2 \mu }{\sigma^2} x\right)=\frac{1}{2}.
\end{align*}
Moreover, $V$ is given by 

\begin{align*}
V(x)=\left\{
\begin{array}{ll}
0 & x\geq a^*\\
\frac{2}{\mu}( a^*e^{-2\mu/\sigma^2 a^*} -xe^{-2\mu/\sigma^2 x})+\frac{2\sigma^2}{\mu^2}(e^{-2\mu/\sigma^2 a^*}-e^{-2\mu/\sigma^2 x} ) +\frac{a^*-x}{\mu} & 0< x< a^*\\
\frac{2}{\mu} a^*e^{-2\mu/\sigma^2 a^*} -\frac{2\sigma^2}{\mu^2}(1-e^{-2\mu/\sigma^2 a^*})+\frac{a^*+x}{\mu} & x\leq 0
\end{array}
\right.
\end{align*}
In Figure \ref{fig:valuefunctionBMwithdrift} we sketch a picture of $V_a(x)$ defined in \eqref{eq:Vcandidateexpression1} for different values of $a$. The parameters chosen for the model are $\mu=1$ and $\sigma=1$.

\begin{figure}[hbtp]
\centering
\includegraphics[scale=.4]{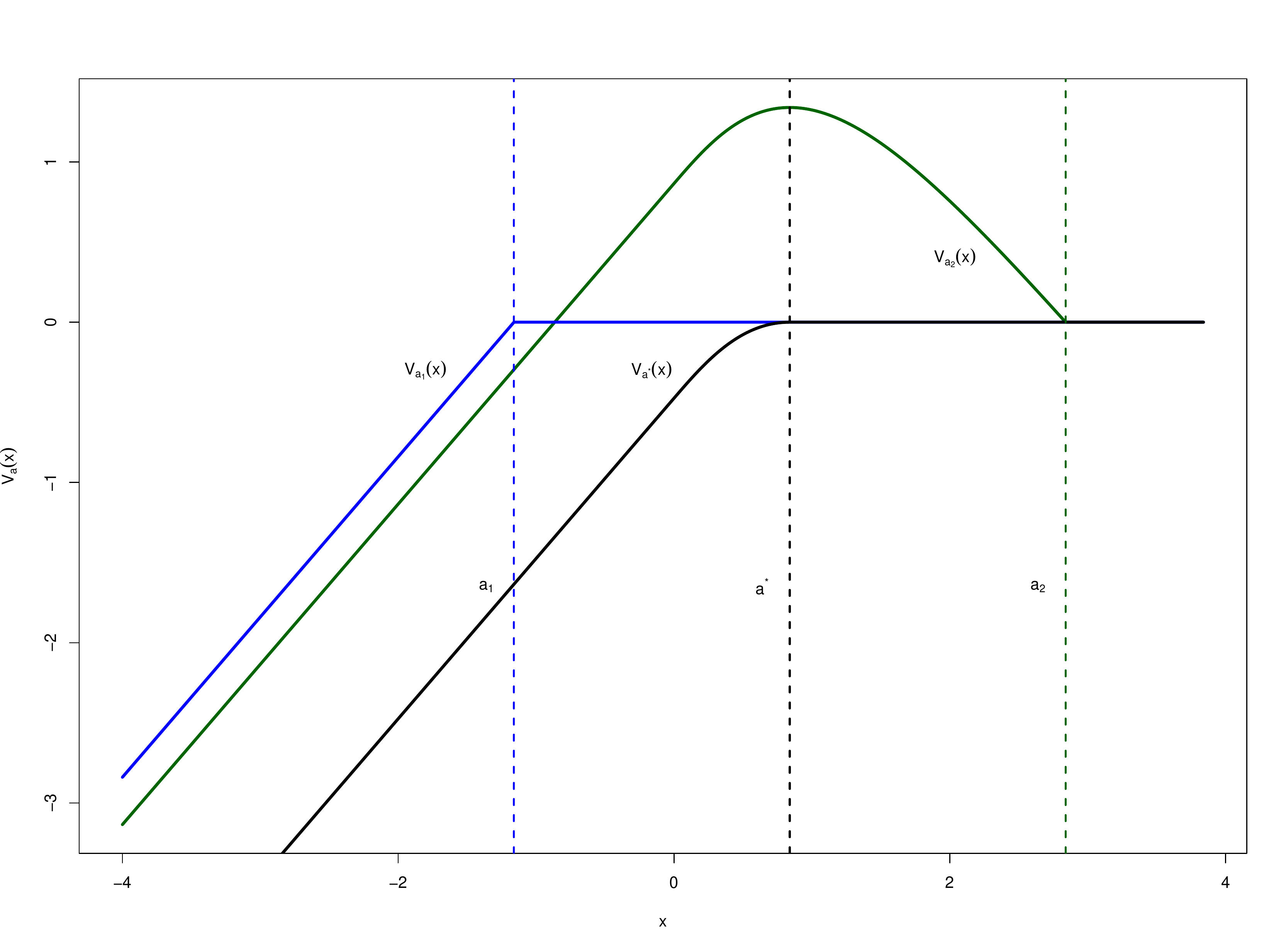}
\caption{Brownian motion with drift. Function $x\mapsto V_a$ for different values of $a$. Blue: $a<a^*$; green: $a>a^*$; black: $a=a^*$.}
\label{fig:valuefunctionBMwithdrift}
\end{figure}

\subsection{Cram\'er--Lundberg risk process}

We consider $X=\{X_t,t\geq 0\}$ as the Cram\'er--Lundberg risk process with exponential claims. That is $X_t$ is given by 

\begin{align*}
X_t=\mu t-\sum_{i=1}^{N_t}\xi_i,\qquad t\geq 0
\end{align*}
where $\mu \geq 0$, $N=\{N_t,t \geq 0 \}$ is a Poisson process with rate $\lambda \geq 0$ and $\xi_i$ is a sequence of independent and identically exponentially distributed random variables with parameter $\rho>0$. Due to the presence of only negative jumps we have that $X$ is a spectrally negative L\'evy process. It can be easily shown that $X$ is a finite variation process. Moreover, since we need the process to drift to infinity we assume that 

\begin{align*}
\frac{\lambda}{\rho \mu}<1.
\end{align*}
The Laplace exponent is given by

\begin{align*}
\psi(\beta)= \mu \beta -\frac{\lambda \beta}{\rho +\beta}, \qquad \beta\geq 0.
\end{align*}

It is well known (see \cite{hubalek2011old}, Example 1.3 or \cite{kyprianou2013fluctuations}, Exercise 8.3 iii)) that the scale function for this process is given by

\begin{align*}
W(x)=\frac{1}{\mu-\lambda/\rho}\left( 1-\frac{\lambda}{\mu \rho}\exp(-(\rho-\lambda/\mu)x)\right), \qquad x\geq 0.
\end{align*}
This directly implies that
\begin{align*}
F(x)=1-\frac{\lambda}{\mu \rho}\exp(-(\rho-\lambda/\mu)x), \qquad x\geq 0.
\end{align*}
and then $H$ is given by

\begin{align*}
H(x)&=\left(1-\frac{\lambda}{\mu \rho} \right)^2+2\frac{\lambda}{\mu \rho}\left(1-\frac{\lambda}{\mu \rho} \right)(1-\exp(-(\rho-\lambda/\mu)x))\\
&\qquad +\left(\frac{\lambda}{\mu \rho}\right)^2\left( 1-(\rho-\lambda/\mu)x\exp(-(\rho-\lambda/\mu)x)-\exp(-(\rho-\lambda/\mu)x))\right), \qquad x\geq 0.
\end{align*}
Hence, when $(1-\lambda/(\mu \rho))^2 \geq 1/2$ we have that $a^*=0$ and 

\begin{align*}
V(x)=\frac{x}{\mu-\lambda/\rho}\I_{\{x\leq 0 \}}.
\end{align*}
For the case $(1-\lambda/(\mu \rho))^2 \leq 1/2$, $a^*$ is the solution to the equation $H(x)=1/2$ and the value function is given by

\begin{align*}
V(x)=\left(\frac{2}{\mu-\lambda/\rho} \int_x^{a^*} H(y)dy-\frac{a^*-x}{\mu-\lambda/\rho}\right) \I_{\{x\leq a^* \}}.
\end{align*}
In Figure \ref{fig:valuefunctionCLriskprocess} we calculate numerically the value of $x\mapsto V_a(x)$ for the parameters $\mu=2$, $\lambda=1$ and $\rho=1$ and different values of $a$. In particular, the numerical value for the latter integral corresponds to $a=a^*$.

\begin{figure}[hbtp]
\centering
\includegraphics[scale=.4]{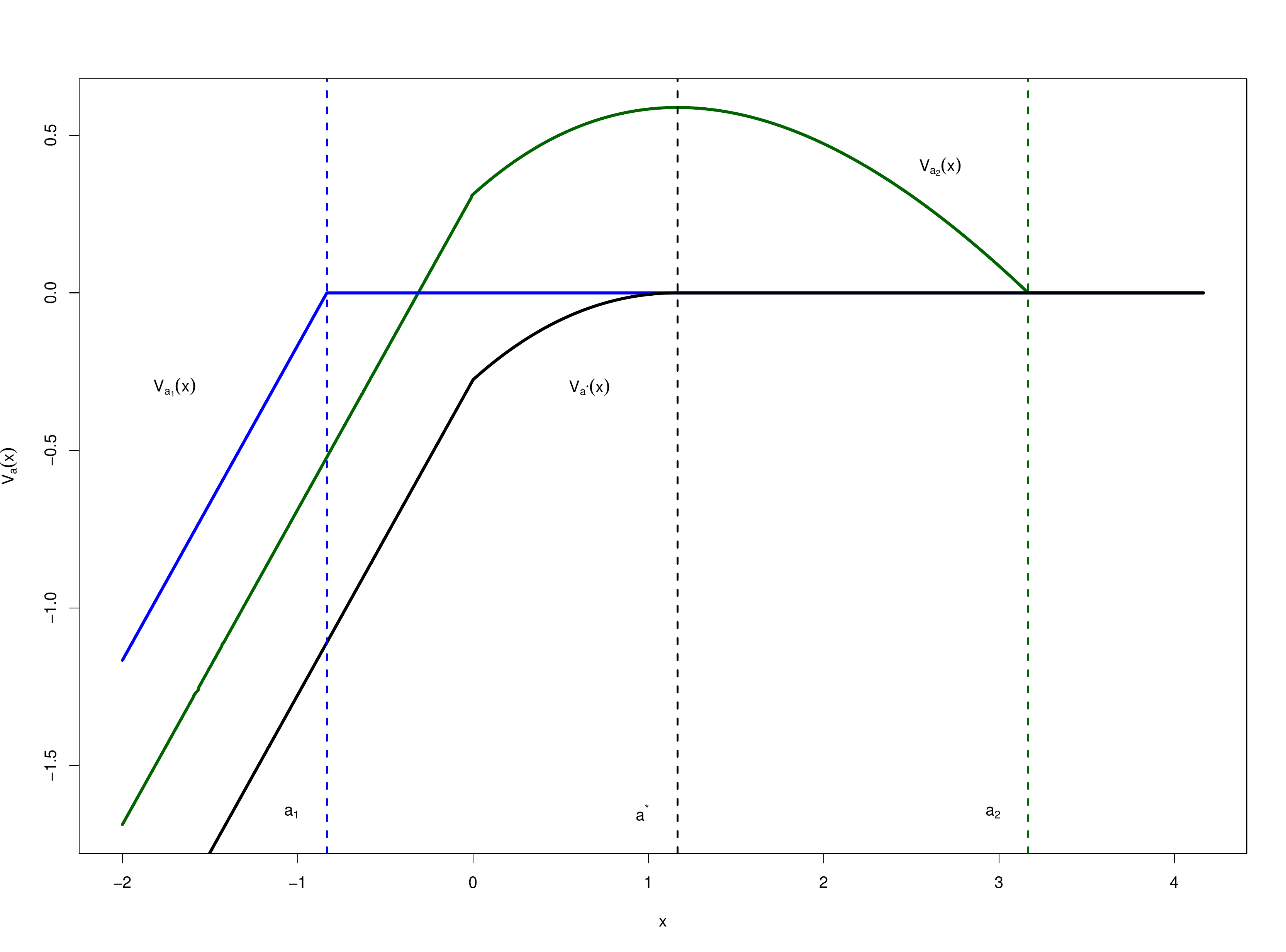}
\caption{Cram\'er--Lundberg risk process. Function $x\mapsto V_a$ for different values of $a$. Blue: $a<a^*$; green: $a>a^*$; black: $a=a^*$.}
\label{fig:valuefunctionCLriskprocess}
\end{figure}

%
%

\subsection{An infinite variation process with no Gaussian component}
In this subsection we consider a spectrally negative L\'evy process related to the theory of self-similar Markov processes and conditioned stable L\'evy processes (see \cite{chaumont2009some}). This process turns out to be the underlying L\'evy process in the Lamperti represetantion of a stable L\'evy process conditioned to stay positive. Now we describe the characteristics of this process (which can also be found in \cite{hubalek2011old}). \\

We consider $X$ as a spectrally negative L\'evy process with Laplace exponent

\begin{align*}
\psi(\theta)=\frac{\Gamma(\theta+\beta)}{\Gamma(\theta)\Gamma(\beta)} , \qquad \theta \geq 0
\end{align*} 
where $\beta \in (1,2)$. This process has no Gaussian component and the corresponding L\'evy measure is given by 

\begin{align*}
\Pi(dy)=\frac{e^{\beta y}}{(e^y-1)^{\beta+1}}dy, \qquad y<0.
\end{align*}
This process is of infinite variation and drifts to infinity. Its scale function is given by 

\begin{align*}
W(x)=(1-e^{-x})^{\beta -1}, \qquad x \geq 0.
\end{align*}
Taking the limit when $x$ goes to infinity on $W$ we easily deduce that $\psi'(0+)=1$ and thus $F(x)=W(x)$. For the case $\beta=2$ we calculate numerically the function $H$ and then the value of the number $a^*$ as well as the values of the function $V$ see Figure \ref{fig:Valuefunctionstableconditionedpositive}. We also included the values of the function $V_a(x)$ (defined in \eqref{eq:Vcandidateexpression1}) for different values of $a$ (including $a=a^*$).\\

  We close this section with a final remark. Using the empirical evidence from the previous examples we make some observations about whether the smooth fit conditions holds.
\begin{rem}
Note in Figures \ref{fig:valuefunctionBMwithdrift}, \ref{fig:valuefunctionCLriskprocess} and \ref{fig:Valuefunctionstableconditionedpositive} that the value $a^*$ is the unique value for which the function $x\mapsto V_a(x)$ exhibits smooth fit (or continuous fit) at $a^*$. When we choose $a_2>a^*$, the function $x\mapsto V_{a_2}(x)$ is not differentiable at $a_2$. Moreover, there exists some $x$ such that $V_{a_2}(x)>0$. Similarly, If $a_1<a^*$ the function $x\mapsto V_{a_1}(x)$ is also not differentiable at $a_1$. 
\end{rem}

\begin{figure}[hbtp]
\centering
\includegraphics[scale=.4]{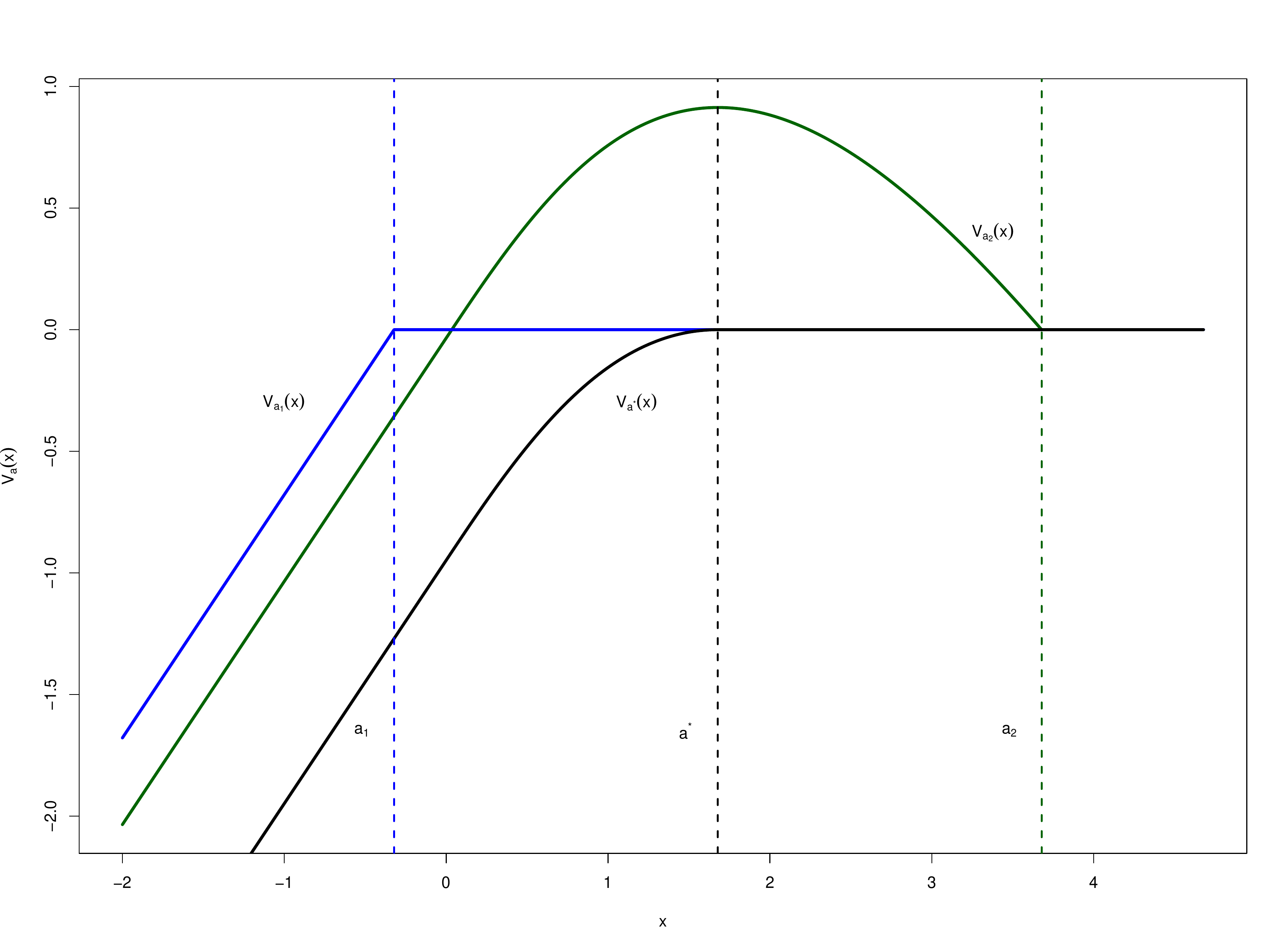}
\caption{Spectrally negative L\'evy process with L\'evy measure $\Pi(dy)=e^{2 y}(e^y-1)^{-3}dy$. Function $x\mapsto V_a$ for different values of $a$. Blue: $a<a^*$; green: $a>a^*$; black: $a=a^*$.}
\label{fig:Valuefunctionstableconditionedpositive}
\end{figure}

\bibliographystyle{apalike}

\bibliography{References}


\end{document}